\definecolor{refkey}{rgb}{1,0,0}
\definecolor{labelkey}{rgb}{0,0,1}
\thanks{This research is funded by Vietnam National Foundation for Science and Technology Development (NAFOSTED) under grand Nr. 101.01-2011.34.} 
\dedicatory{Dedicated to Professor H\`a Huy Kho\'ai on the occasion of his sixty-fifth  birthday}
\begin{document}
\parskip10pt

\theoremstyle{plain}
\newtheorem{thm}{Theorem}
\newtheorem{lem}[thm]{Lemma}
\newtheorem{cor}[thm]{Corollary}
\newtheorem{prop}[thm]{Proposition}
\newtheorem{rmk}[thm]{Remark}
\newtheorem{const}[thm]{Construction}
\newtheorem{rmks}[thm]{Remarks}
\newtheorem{ex}[thm]{Example}
\newtheorem{conj}[thm]{Conjecture}

\theoremstyle{definition}
\newtheorem{disc}[thm]{Discussion}
\newtheorem{question}[thm]{Question}
\newtheorem{nota}[thm]{Notations}
\newtheorem{chal}[thm]{Challenge}
\newtheorem{claim}[thm]{Claim}
 \newtheorem{defn}[thm]{Definition}
 \numberwithin{thm}{section}
\newcommand{\surj}{\twoheadrightarrow}
\newcommand{\inj}{\hookrightarrow}  
\newcommand{\sA}{{\mathcal A}}
\newcommand{\sB}{{\mathcal B}}
\newcommand{\sC}{{\mathcal C}}
\newcommand{\sD}{{\mathcal D}}
\newcommand{\sE}{{\mathcal E}}
\newcommand{\sF}{{\mathcal F}}
\newcommand{\sG}{{\mathcal G}}
\newcommand{\sH}{{\mathcal H}}
\newcommand{\sI}{{\mathcal I}}
\newcommand{\sJ}{{\mathcal J}}
\newcommand{\sK}{{\mathcal K}}
\newcommand{\sL}{{\mathcal L}}
\newcommand{\sM}{{\mathcal M}}
\newcommand{\sN}{{\mathcal N}}
\newcommand{\sO}{{\mathcal O}}
\newcommand{\sP}{{\mathcal P}}
\newcommand{\sQ}{{\mathcal Q}}
\newcommand{\sR}{{\mathcal R}}
\newcommand{\sS}{{\mathcal S}}
\newcommand{\sT}{{\mathcal T}}
\newcommand{\sU}{{\mathcal U}}
\newcommand{\sV}{{\mathcal V}}
\newcommand{\sW}{{\mathcal W}}
\newcommand{\sX}{{\mathcal X}}
\newcommand{\sY}{{\mathcal Y}}
\newcommand{\sZ}{{\mathcal Z}}
\newcommand{\A}{{\mathbb A}}
\newcommand{\B}{{\mathbb B}}
\newcommand{\C}{{\mathbb C}}
\newcommand{\D}{{\mathbb D}}
\newcommand{\E}{{\mathbb E}}
\newcommand{\F}{{\mathbb F}}
\newcommand{\G}{{\mathbb G}}
\renewcommand{\H}{{\mathbb H}}
\newcommand{\J}{{\mathbb J}}
\newcommand{\M}{{\mathbb M}}
\newcommand{\N}{{\mathbb N}}
\renewcommand{\P}{{\mathbb P}}
\newcommand{\Q}{{\mathbb Q}}
\newcommand{\R}{{\mathbb R}}
\newcommand{\T}{{\mathbb T}}
\newcommand{\U}{{\mathbb U}}
\newcommand{\V}{{\mathbb V}}
\newcommand{\W}{{\mathbb W}}
\newcommand{\X}{{\mathbb X}}
\newcommand{\Y}{{\mathbb Y}}
\newcommand{\Z}{{\mathbb Z}} 

\newcommand{\Diff}{\mathcal D\text{\it iff}\hspace{.2ex}}
\newcommand{\Der}{\mathcal D\text{\it er}\hspace{.2ex}}
\def\Qcoh{{\rm Qcoh\hspace{.2ex}}}
\def\Str{{\sf Str\hspace{.1pt}}}
\def\str{{\sf str\hspace{.1pt}}}
\def\iStr{\widehat{\Str}}
\def\spec{{\rm Spec\hspace{.2ex}}}
\def\Rep{\mbox{\sf Rep}}
\def\to{\longrightarrow}
 
\author{Ph\`ung H\^o Hai}
\address{Institute of Mathematics, Hanoi}
\email{phung@math.ac.vn}
\title[Gau\ss-Manin stratification and  fundamental group schemes]{Gau\ss-Manin stratification and stratified fundamental group schemes
} 
\subjclass[2010]{14F05; 14F35, 14L17}
\keywords{Stratified bundle, Gau\ss-Manin stratification, homotopy sequence}

\maketitle  
\begin{abstract}
We define the zero-th Gau\ss-Manin stratification of a stratified bundle with respect to a smooth morphism and use it to study the homotopy sequence of stratified fundamental group schemes.\\

\noindent{\sc R\'esum\'e}: On d\'efinit la stratification de Gauss-Manin d'un fibr\'e stratifi\'e
relativement à un morphism lisse et l'utilise pour étudier la suite
d'homotopie des groupes fondamentaux stratifiés.

\end{abstract}

\section*{Introduction}
Let $X/k$ be a smooth scheme, where $k$ is a field of characteristic 0. A connection on $X$ is an $\sO_X$-module $E$ equipped with an $\sO_X$-linear map $\nabla:\Der(X/k)\to \sE nd_k(E)$. Notice that $\Der(X/k)$ is a sheaf of Lie algebras and $\sE nd_k(E)$ is also a sheaf of Lie algebras with respect to the commutator. The connection $\nabla$ is said to be flat if it is a homomorphism of Lie algebras. Since $k$ has characteristic 0, a flat connection $\nabla$ induces an algebra homomorphism from the sheaf of differential operators $\Diff(X/k)$ to $\sE nd(X/k)$. Thus a flat connection on $E$ is the same as a $\Diff(X/k)$-module structure on $E$. This is no longer the case if $k$ has positive characteristic. In the latter case, the action of $\Diff(X/k)$ on $E$ is called the stratification. Briefly speaking, a stratification is a family of connections. This notion is due to A.~Grothendieck, see \cite{BO} for details.

The famous example, due to A.~Grothendieck, that in characteristic $p$ the notion of stratified sheaves and of flat connections are different, is given in terms of a Gau\ss-Manin connection. It is known that a stratified sheaf, which is coherent as $\sO_X$-module, is locally free. One can construct a Gau\ss-Manin connection from a family of elliptic curves, which is not locally free, cf. \cite[Chapter~1]{BO}. However, if we start with  stratified sheaves and define for them appropriately Gau\ss-Manin connections, then these become stratified sheaves. This is one of the main results of the underlying work.

A disadvantage of the notion of stratification in characteristic $p>0$ is that it requires infinitely many conditions. On the other hand, in this case, stratified sheaves have many of the good properties that the flat connections have in characteristic 0, for instance, a stratified sheaf which is coherent as an $\sO_X$-module is locally free and the category of stratified bundles is an abelian, tensor category. Tannaka duality applied to this category and the fiber functor at a rational point yields a pro-algebraic group scheme, called the stratified fundamental group scheme. Many interesting properties of this group scheme have been found, see e.g. in \cite{Gieseker,santos,esnault}. 

In this work we are interested in the exactness of the homotopy sequence of the stratified fundamental groups associated to a smooth proper maps. The question is still open in its general formulation. We show however the exactness of the homotopy sequence of the solvable quotients. The proof is based on the base change property of the 0-th stratified Gau\ss-Manin connection. Our method also allows one to show the K\"unneth type formula for the product of smooth schemes (one of which is proper). This fact has indeed been proved implicitly by Gieseker (by another method)\cite[Prop.~2.4]{Gieseker}.

\section{Stratified sheaves}\label{sec_str}

 In this section we extend the equivalence of the notions of stratified bundles and flat bundles, given in \cite{Gieseker} to the relative setting.

Let $X\stackrel f\to S$ be a smooth map of  schemes over $k$, where $k$ is a field of characteristic $p>0$. 

Let $\Diff(X/S)$ denote the sheaf of algebras of $\sO_S$-linear
differential operators on $\sO_X$:
$$\sE nd_{\sO_S}(\sO_X)\supset \Diff(X/S)=\bigcup_m\Diff^m,$$ where $\Diff^0=\sO_X$ ($\sO_X$ acts on itself by multiplication),  and
$\Diff^m\subset \Diff^{m+1}$ are defined inductively. An operator $D\in
\sE nd_{\sO_S}(\sO_X)$ lies in $\Diff^m$
if for any $a\in \sO_X$, the operator $D_a$,
$$D_a(t)=D(at)-aD(t), t\in \sO_X$$
lies in $\Diff^{m-1}$. Elements of $\Diff^m$ are called
differential operators of order (not exceeding) $n$. One checks that $ \Diff^1=\Der(X/S)+\sO_X$.

Let $x_1,\ldots,x_k$ be local coordinates on $X/S$, i.e., $dx_1,\ldots,dx_k$ form a basis
for $\Omega^1_{X/S}$. Then $\Diff(X/S)$, {\em as a left $\sO_X$-module}, has a local basis consisting of the following operators
\begin{equation}\label{basis} D_{\mathbf n},  \mathbf n =(n_1,\ldots,n_k), D_{\mathbf n}(x^{\mathbf m})={\mathbf m\choose\mathbf n} x^{\mathbf m-\mathbf n},\end{equation}
here $x^l=x_1^{l_1}\ldots x_k^{l_k}$. We have the following multiplication rule:
\begin{equation}\label{multip_rule}
D_{\mathbf m}D_{\mathbf n}={\mathbf m+\mathbf n\choose\mathbf n} D_{\mathbf m+\mathbf n}.
\end{equation}
See \cite[16.11.2]{EGA4}.
In particular these operators commute. Note however that they don't commute with the operator of order $0$, $t_a:b\mapsto ab$. Their commuting rule is given above: $[D,t_a]=D_a$.

For instant we have $D_{(0,\ldots, 1,\ldots,0)}=\partial_{x_i}$. Note that, unlike the characteristic 0 case, these operators do not generate $\Diff(X/S)$.

A stratified sheaf is a quasi-coherent $\sO_X$-module equipped with an action of $\Diff(X/S)$, that is, an
$\sO_S$-linear algebra homomorphism 
\begin{equation}\nabla:\Diff(X/S)\to \sE nd_{\sO_S}(E)\end{equation}
such that for $a\in \sO_X$, $\nabla(a)=t_a$ - the multiplication by $a$.
Morphism between stratified sheaves are those $\sO_X$-module maps that commute with the
actions of $\Diff(X/S)$. The category of stratified sheaves over $X/S$ is denoted by $\Str(X/S)$.

We now give an alternative description of stratified sheaves, called flat sheaves. First, recall the notion of relative Frobenius.
Let $F_S$ (resp. $F_X$) denote the (absolute) Frobenius map of $S$ (resp. $X$). 
Let $X^{(1)}$ be the pull-back of $X$ by $F_S$:
$$X^{(1)}:=X\times_{F_S}S.$$
Locally, functions on $X^{(1)}$ have the form $s\otimes a$, $a\in \sO_X$, $s\in \sO_S$, with $s\otimes ta=st^p\otimes a$.
	Since the absolute Frobenius of $X$ is compatible with $F_S$, by the universal property of $X^{(1)}$ we have a map $F:X\to X^{(1)}$, the relative Frobenius of $X$. $F$ is given by the map $\sO_{X^{(1)}}\to \sO_X$, $s\otimes a\mapsto sa^p$.

 Let $E$ be a stratified sheaves on $X/S$. The restriction of $\nabla$
to $\Der(X/S)$ is a connection on $E$, denoted by 
$\nabla^{(0)}$. The $p$-curvature of this connection is zero, since $\nabla$
is an algebra homomorphism. Hence, according to a theorem of Cartier \cite[ Thm~5.1]{Katz}, the subsheaf $E^{(1)}$ of sections killed by $\Der(X/S)$ is $\sO_{X^{(1)}}$-linear, where
the action of $\sO_{X^{(1)}}$ given as follows:
\begin{equation}\label{eq:action_e1}(s\otimes a)(e)=sa^p e, a\in \sO_X,s\in \sO_S, e\in E.\end{equation}
Further the natural map
$$F^*(E^{(1)})=\sO_X\otimes_FE^{(1)}\to E,\quad a\otimes e\mapsto ae$$
is an isomorphism.

Consider $E^{(1)}$ as a sheave on $X^{(1)}$. The stratification $\nabla$ induces on it a connection, denoted by $\nabla^{(1)}$, 
defined as follows. Note that $\Der(X^{(1)}/S)\cong \sO_S\otimes_{F_S}\Der(X/S)$.
For $D=\sum_ia_iD_i\in \Der(X/S)$ let $D^{(1)}$ be a differential operator
of order at most $p$ and satisfies
\begin{equation}D^{(1)}(a^p)=D(a)^p, a\in \sO_X\label{D1}.\end{equation}
For example, one can take $D^{(1)}=\sum_ia_i^pD_{(0,\ldots,p,\ldots,0)}$. Now we define
\begin{equation}\nabla^{(1)}(s\otimes D):=s\nabla(D^{(1)}), s\in \sO_S, D\in\Der(X/S).\end{equation}
It is easy to check that $\nabla^{(1)}(s\otimes D)$ does not depend on the choice of $D^{(1)}$, since two such choices differ by an operator of order less than $p$ hence killing $E^{(1)}$.  Further
$$\nabla^{(1)}(s\otimes tD)=s\nabla(t^pD^{(1)})=st^p\nabla(D^{(1)})=\nabla^{(1)}(st^p\otimes D).$$
Thus $\nabla^{(1)}$ is well-defined.  
\begin{rmk}\label{rmk1}
The action of $\nabla^{(1)}$ is thus $\sO_S$-linear. For simplicity we shall, abusing language, write $\nabla^{(1)}(D)$ instead of 
$\nabla^{(1)}(1\otimes D)$.
The above constructions can be generalized to higher powers of $p$.
\end{rmk}

The following theorem is a relative version of Katz-Gieseker's results \cite{Gieseker}.
\begin{thm}\label{thm:k-g}  Let $(E,\nabla)$ be a stratified sheaf on $X/S$.
Then, for $i\geq 1$:
\begin{enumerate}
\item  The subsheaf $E^{(i)}$ of $E$:
$$E^{(i)}:=\left\{ e\in E\left| \nabla(D)(e)=0, \text{ forall } D, \text{ with \rm ord}(D)< p^i \text{ and } D(1)=0\right.\right\}$$
is a sheaf on $X^{(i)}$ with the action given by $(s\otimes a)e=sa^{p^i} e$.
In particular they are {\em$f^{-1}\sO_S$-subsheaves of $E$.}
\item The stratification $\nabla$ induces an integrable connection $\nabla^{(i)}$ with $p$-curvature 0 on $E^{(i)}$ as
an $\sO_{X^{(i)}}$-sheaf.
\item We have the equalities $E^{(i)}=\ker \nabla^{(i-1)}$
 and 
$F^*_{X^{(i)}/S}E^{(i)}\cong E^{(i-1)}$.
\item Conversely, for a sequence of quasi-coherent sheaves $E^{i}$ on $X^{(i)}$ together with isomorphisms $\sigma_i: F^*_{X^{(i)}/S}E^{i}\cong E^{i-1}$, there exists a
unique stratification on $E=E^{0}$ such that $E^{(i+1)}\cong E^{i+1}$.
\end{enumerate}
\end{thm}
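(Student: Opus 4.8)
The plan is to work Zariski-locally in coordinates $x_1,\dots,x_k$ and to reduce everything to three elementary facts about the divided-power operators $D_{\mathbf n}$: the multiplication rule \eqref{multip_rule}, the Leibniz-type comultiplication $D_{\mathbf m}\, t_b=\sum_{\mathbf n\le\mathbf m} t_{D_{\mathbf n}(b)}\,D_{\mathbf m-\mathbf n}$, and the base-$p$ factorization $D_{\mathbf m}=\prod_{j\ge 0} D_{p^j\mathbf a^{(j)}}$, where $\mathbf m=\sum_j p^j\mathbf a^{(j)}$ is the base-$p$ expansion; all of these follow from \eqref{multip_rule} together with Lucas' congruence $\binom{p\mathbf m}{p\mathbf n}\equiv\binom{\mathbf m}{\mathbf n}\pmod p$. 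I first record that $e$ lies in $E^{(i)}$ iff $\nabla(D_{\mathbf m})e=0$ for every $\mathbf m\neq 0$ with $|\mathbf m|<p^i$, since any order-$<p^i$ operator killing $1$ is an $\sO_X$-combination of such $D_{\mathbf m}$.

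With this, part (1) is immediate from the Leibniz formula: since $D_{\mathbf n}(a^{p^i})=0$ for $0<|\mathbf n|<p^i$ (a Lucas computation on monomials), one gets $\nabla(D_{\mathbf m})\,t_{a^{p^i}}=t_{a^{p^i}}\nabla(D_{\mathbf m})$ on $E$ for $|\mathbf m|<p^i$, so $E^{(i)}$ is stable under multiplication by $p^i$-th powers and is $f^{-1}\sO_S$-linear, i.e.\ an $\sO_{X^{(i)}}$-module with the stated action. For (2) I let the coordinate field $\partial_{x_j^{(i)}}$ act through $\nabla(D_{p^ie_j})$; the Leibniz rule again shows this preserves $E^{(i)}$ and obeys the connection rule, the operators $D_{p^ie_j}$ commute so $\nabla^{(i)}$ is integrable, and $D_{p^ie_j}^{\,p}=0$ (the accumulated coefficient $\binom{p^{i+1}}{p^i}\equiv0\bmod p$) forces the $p$-curvature to vanish.

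The spine of the forward direction is an \emph{engine lemma}: $E^{(1)}$, viewed on $X^{(1)}/S$, carries a stratification $\nabla^{[1]}$ defined by $\nabla^{[1]}(D_{\mathbf n}^{(1)}):=\nabla(D_{p\mathbf n})|_{E^{(1)}}$, and the towers shift, $(E^{(1)})^{(j)}=E^{(j+1)}$ as sheaves on $(X^{(1)})^{(j)}=X^{(j+1)}$. That $\nabla^{[1]}$ is a well-defined $\Diff(X^{(1)}/S)$-action preserving $E^{(1)}$ follows from $D_{\mathbf l}D_{p\mathbf n}=D_{\mathbf l+p\mathbf n}$ for $|\mathbf l|<p$ and from the homomorphism congruence $\binom{p\mathbf m+p\mathbf n}{p\mathbf n}\equiv\binom{\mathbf m+\mathbf n}{\mathbf n}$, and its restriction to $\Der(X^{(1)}/S)$ recovers the $\nabla^{(1)}$ of the text. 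The tower-shift is a direct comparison of defining conditions: writing $D_{\mathbf m}=D_{\mathbf a}\,D_{p\mathbf b}$ with $\mathbf a$ the unit digits, the factor $D_{\mathbf a}$ is a product of single-coordinate divided powers of exponent $<p$, hence a polynomial in the $\partial_{x_l}$, and so kills $E^{(1)}=\ker\nabla^{(0)}$ whenever $\mathbf a\neq 0$, while the case $\mathbf a=0$ is exactly the condition imposed by $(E^{(1)})^{(j)}$. The base case $i=1$ is the content recalled before the theorem via Cartier's theorem \cite[Thm~5.1]{Katz}. Feeding the engine lemma into Cartier's equivalence and inducting on $i$ yields $E^{(i)}=\ker\nabla^{(i-1)}$ and $F^*_{X^{(i)}/S}E^{(i)}\cong E^{(i-1)}$ for all $i$.

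For the converse (4) I reverse this. Each $\sigma_{j+1}:F^*_{X^{(j)}/S}E^{j+1}\cong E^j$ equips $E^j$, by Cartier descent, with an integrable $p$-curvature-$0$ connection $\nabla_j$ with $\ker\nabla_j=E^{j+1}$; transporting it through $\sigma_1,\dots,\sigma_j$ and using the factorization, I define $\nabla(D_{\mathbf m})$ on $E^0$ by letting the pure level-$j$ piece $D_{p^j\mathbf a^{(j)}}$ act through $\nabla_j$. Uniqueness is clear, since a stratification is determined by the induced connections $\nabla^{(j)}$, and these are forced by $(E^j,\sigma_j)$. \textbf{The main obstacle} is verifying that this prescription is a genuine algebra homomorphism, i.e.\ that it respects \eqref{multip_rule}: when adding multi-indices produces a carry in base $p$, Kummer's theorem makes $\binom{\mathbf m+\mathbf n}{\mathbf n}\equiv 0$, so the product $\nabla(D_{\mathbf m})\nabla(D_{\mathbf n})$ must itself vanish, and this vanishing has to be produced by the inter-level data — precisely, a carry from level $j$ to level $j+1$ must be absorbed by the identification $\ker\nabla_j=E^{j+1}$ coming from $\sigma_{j+1}$. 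Checking that every carry is matched by the $\sigma$'s, uniformly in the level, is the delicate bookkeeping on which the whole converse rests.
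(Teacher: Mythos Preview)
Your argument is correct and, in fact, considerably more complete than what the paper supplies. The paper's proof treats only the case $i=1$: it verifies the Leibniz rule for $\nabla^{(1)}$ via the intrinsic characterization $D^{(1)}(a^p)=D(a)^p$ and the commutator identity $D^{(1)}_{a^p}(b)=D(a)^p\,b$, notes that flatness is immediate from $[D,D']^{(1)}=[D^{(1)},D'^{(1)}]$, and stops there --- parts (iii) and (iv) are left to the reader, implicitly relying on Cartier's theorem recalled just before the statement and on iteration.

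Your route is genuinely different in execution. You work throughout in the explicit coordinate basis $D_{\mathbf n}$ and drive everything with Lucas/Kummer arithmetic and the base-$p$ factorization $D_{\mathbf m}=\prod_j D_{p^j\mathbf a^{(j)}}$. The ``engine lemma'' --- that $E^{(1)}$ itself carries a full stratification over $X^{(1)}/S$ with $(E^{(1)})^{(j)}=E^{(j+1)}$ --- is exactly the inductive mechanism the paper gestures at but does not write down, and it makes (iii) a clean one-line induction on top of Cartier. For (iv) you go further than the paper by actually proposing a construction and isolating the genuine obstruction (carries in base $p$ must be absorbed by the identifications $\ker\nabla_j=E^{j+1}$); the paper says nothing here. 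Two minor remarks: the vanishing you need for $p$-curvature is $(D_{p^ie_j})^{p}=\dfrac{(p^{i+1})!}{(p^i!)^p}\,D_{p^{i+1}e_j}$ with the multinomial coefficient divisible by $p$, not merely the single binomial $\binom{p^{i+1}}{p^i}$; and your closing sentence on (iv) reads as an acknowledged gap rather than a proof --- a cleaner way to finish is to run the engine lemma in reverse, building the stratification on $E^0$ inductively from the stratification on $E^1$ furnished by $(E^j,\sigma_j)_{j\ge 1}$, which sidesteps the carry bookkeeping entirely.
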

\begin{proof}
For simplicity we shall give the proof for $i=1$. We first show that $(E^{(1)},\nabla^{(1)})$ is a flat connection on $X^{(1)}$.
We have ($a,b\in \sO_X$)
\begin{eqnarray*}D^{(1)}_{a^p}(b)&=& D^{(1)}(a^pb)-a^pD^{(1)}(b)\\
&=&D^{(1)}_b(a^p)+bD^{(1)}(a^p)-a^pD^{(1)}(b)\\
&=&D(a)^pb+ a^pD^{(1)}_b(1)-a^pD^{(1)}(b)\\
&=& D(a)^pb-a^pD^{(1)}(1)\\
&=& D(a)^pb.\end{eqnarray*}
Hence we have, for $a\in \sO_X, e\in E^{(1)}$,
\begin{eqnarray*} \nabla^{(1)}(1\otimes D)(a^pe)&=&a^p\nabla(D^{(1)})(e)+\nabla(D^{(1)}_{a^p})(e)\\
&=& a^p\nabla^{(1)}(D)(e)+D(a)^pe
\end{eqnarray*}
which, according to \eqref{eq:action_e1}, is the Leibniz rule for $\nabla^{(1)}$. Thus $\nabla^{(1)}$ is a connection. Its flatness is obvious as we have
$$[D,D']^{(1)}=[D^{(1)},D'^{(1)}],$$
and $\nabla$ is an algebra homomorphism.
\end{proof}
 The data $(E^i,\sigma_i:F^*_{X^{(i)}/S}E^{i+1}\cong E^{i})$ is called a flat sheaf on
 $X$. Theorem \ref{thm:k-g} provides a 1-1 correspondence between flat sheaves and stratified sheaves. As a corollary of this fact we have: 
 stratified sheaves form  a tensor category with respect to the tensor product over $\sO$, the unit object being the trivial stratified sheaf $(\sO_X,i)$ where $i$ is the embedding
$\Diff(X/S)\to \sE nd_{\sO_S}(\sO_X)$.

\begin{rmk}
If $(E,\nabla)$ is a stratified sheaf on $X/k$, where $k$ is a field and $E$ is coherent as $\sO_X$-module, then $E$ is indeed locally free, \cite[Note 2.17]{BO}. We shall call such a sheaf a {\em stratified bundle}. The category of stratified bundles on $X/k$ is denoted by $\str(X/k)$.
The proof above defines a stratification, also denoted by $\nabla^{(i)}$ on each $E^{(i)}$ as a sheaf on $X^{(i)}/k$. Moreover, the functor $(E,\nabla)\mapsto (E^{(i)},\nabla^{(i)})$ is inverse to the Frobenius functor $(E,\nabla)\mapsto F^{*i}(E,\nabla)$. In particular, $E^{(i)}$ are all locally free. A direct proof the latter fact is given in \cite{santos}.
\end{rmk}

\begin{defn}\label{h0}
A local section of $E$ is said to be horizontal if it is annihilated by differential operators $D$ with $D(1)=0$. 
The sheaf of horizontal sections of a stratified sheaf $(E,\nabla)$ is denoted by $E^\nabla$.
Further we define the 0-th stratified cohomology sheaf
$$H^0_{\rm str}(X/S,(E,\nabla)):=f_*(E^\nabla).$$
This is a sheaf on $S$.\end{defn}

\begin{lem}\label{h0_lexact}
Let $f:X\to S$ be a smooth map and $(E,\nabla)$ be a stratified sheaf on $X/S$. Then
\begin{enumerate}
\item The functor $H_{\rm str}^0(X/S,-)$ is a left exact functor from 
the category of stratified sheaves on $X/S$ to ${\sf Qcoh}(S)$.
\item We have the following equality of $\sO_S$- submodules in $f_*(E)$:
$$H_{\rm str}^0(X/S,(E,\nabla))=\bigcap_if_*E^{(i)}.$$
\item 
If $E$ is coherent $\sO_X$-module, then $H_{\rm str}^0(X/S,(E,\nabla))$ is a coherent
sheaf on $S$.
\end{enumerate}\end{lem}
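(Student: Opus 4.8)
The plan is to dispatch (i) and (ii) as formal consequences of left exactness, and to reduce the real content, (iii), to a rigidity property of horizontal sections. For (i) I would observe that $(-)^\nabla=\mathcal{H}om_{\str(X/S)}(\sO_X,-)$: a stratified morphism $\sO_X\to E$ is the same as the horizontal section given by the image of $1$. Being a $\mathcal{H}om$ out of a fixed object, equivalently the intersection of the kernels of the $\sO_S$-linear endomorphisms $\nabla(D)$ over all $D$ with $D(1)=0$, the functor $(-)^\nabla$ is left exact with quasi-coherent values. Since $f_*$ is left exact and preserves quasi-coherence, the composite $H^0_{\rm str}(X/S,-)=f_*\circ(-)^\nabla$ is a left exact functor into $\Qcoh(S)$.

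For (ii) I would first note the equality of subsheaves $E^\nabla=\bigcap_i E^{(i)}$ in $E$: every differential operator has finite order, so the conditions ``$D(1)=0$ and $\mathrm{ord}(D)<p^i$ for some $i$'' exhaust all $D$ with $D(1)=0$, while the $E^{(i)}$ form a descending chain. Then I would invoke that $f_*$, being a right adjoint, preserves all limits, in particular this intersection (realized as $\ker(E\to\prod_i E/E^{(i)})$); this yields $f_*\bigl(\bigcap_i E^{(i)}\bigr)=\bigcap_i f_*E^{(i)}$ inside $f_*E$, which is precisely the asserted formula.

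The substantive statement is (iii), and here the hard point is that $f$ is only smooth, not proper, so $f_*$ does not preserve coherence and neither $f_*E$ nor the individual $f_*E^{(i)}$ need be coherent. The idea I would exploit is that a horizontal section is rigid: it is determined by its value at a single point. Concretely, after choosing relative coordinates and a (non-horizontal) local frame of $E$, the equations $\nabla(D_{\mathbf n})(e)=0$ express every divided Taylor coefficient of $e$ at a point $x_0$ in terms of $e(x_0)$; hence a horizontal section vanishing at $x_0$ vanishes in the completion $\hat{\sO}_{X,x_0}$, and therefore on a Zariski neighbourhood. Consequently the zero locus of any $e\in H^0_{\rm str}$ is open, and being closed as well it is a union of connected components of the fibres of $f$. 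This formal Poincar\'e-type lemma is the one delicate step, and the place I expect to spend the most care.

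To conclude I would work \'etale-locally on $S$, since coherence descends along \'etale covers and we may assume $S$ Noetherian. Over a small enough base a smooth morphism has sections, so I would choose finitely many sections $\sigma_1,\dots,\sigma_N$ of $f$ whose images meet every connected component of every fibre (possible since the fibres have locally boundedly many components). By the rigidity above the evaluation map
$$\mathrm{ev}\colon f_*E^\nabla\longrightarrow \bigoplus_{j=1}^N \sigma_j^*E$$
is injective. The target is coherent on $S$, while $f_*E^\nabla$ is quasi-coherent by (i); since a quasi-coherent subsheaf of a coherent sheaf on a Noetherian scheme is coherent, this proves (iii).
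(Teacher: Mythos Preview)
Your treatment of (i) and (ii) is close to the paper's, but there is a slip in (i): $E^\nabla$ is only an $f^{-1}\sO_S$-submodule of $E$, not an $\sO_X$-submodule, so the phrase ``$(-)^\nabla$ has quasi-coherent values and $f_*$ preserves quasi-coherence'' does not parse. The paper handles this by checking directly that $f_*(E^\nabla)$ commutes with localization on $S$, using that $\Diff(R\otimes_A B/B)\cong\Diff(R/A)\otimes_A B$ for a localization $A\to B$. Your identification $E^\nabla=\sH om_{\Diff(X/S)}(\sO_X,E)$ and the limit argument for (ii) match the paper.

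For (iii) you take a genuinely different route. The paper works Zariski-locally with $S=\spec A$, $X=\spec B$, and proves by a direct computation with the operators $D_{\mathbf n}$ that the $B$-linear map $B\otimes_A E^\nabla\to E$ is injective; then $B\otimes_A E^\nabla$ is coherent over $B$, and faithful flatness of $B/A$ forces $E^\nabla$ to be finitely generated over $A$. No sections of $f$ and no connectedness of fibres are needed. Your approach trades this algebra for geometry: a Taylor-expansion rigidity argument plus evaluation along sections of $f$.

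Two points need attention in your version. First, the rigidity step overshoots: the relative operators $D_{\mathbf n}$ only control the Taylor expansion \emph{along the fibre}, so a horizontal $e$ with $e(x_0)=0$ vanishes in the completion of $X_s$ at $x_0$, not in $\hat\sO_{X,x_0}$. The zero locus of $e$ is therefore open and closed \emph{in each fibre}, which is what your evaluation argument actually uses, but your intermediate sentence ``the zero locus is open'' (in $X$) is not what you have shown. Second, the step ``choose finitely many sections whose images meet every connected component of every fibre'' is not justified by ``the fibres have locally boundedly many components'': even when the number of geometric components is bounded, producing sections that track all of them simultaneously as $s$ varies is delicate for non-proper $f$, since components can merge and split. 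You would need a stratification/Noetherian-induction argument here, and it is not a one-liner.

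A cleaner fix that stays within your framework is to observe that your rigidity already gives injectivity of $f^*f_*(E^\nabla)\to E$ (check it fibrewise at closed points using the Tannakian faithfulness of the fibre functor on $\str(X_s/k(s))$). Then $f^*f_*(E^\nabla)$ is a quasi-coherent subsheaf of the coherent $E$, hence coherent on $X$; since $f$ is smooth (hence faithfully flat onto its open image, and $f_*E^\nabla$ vanishes off the image), $f_*(E^\nabla)$ is coherent on $S$. This is exactly the paper's argument, reinterpreted through your rigidity lens, and it avoids the section-finding problem entirely.
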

\begin{proof}
i) We have $E^\nabla=\sH om_{\Diff(X/S)}(\sO_X,E)$. Thus the functor $(E,\nabla)\mapsto E^\nabla$ is left exact. We show that $f_*(E^\nabla)$ is quasi-coherent over $S$. This is a local property, so assume $S$ is affine, $S=\text{Spec} (A)$. We have
$$f_*(E^\nabla)(S)=\text{Hom}_{\Diff(X/S)}(\mathcal O,E).$$
Let $U=\text{Spec} B$ be an open subset in $S$ and $Y:=f^{-1}(U)$.
It is to show that
$$\text{Hom}_{\Diff(X/S)}(\mathcal O,E)\otimes_AB\cong
\text{Hom}_{\Diff(Y/U)}(\mathcal O,E|_Y).$$
We check this isomorphism locally on $X$. Thus we cover $X$ by affine subsets $X_i=\text{Spec} R_i$, then $Y\cup X_i=\text{Spec}( R_i\otimes_AB)$. Further we have $\Diff(R_i/A)\otimes_AB\cong\Diff(R_i\otimes_AB/B)$. Thus one is led to check:
$$\text{Hom}_{\Diff(R_i/A)}(R_i,E_i)\otimes_AB\cong
\text{Hom}_{\Diff(R_i/A)\otimes B}(R_i\otimes_AB,E_i\otimes_AB),$$
where $E_i$ denotes the restriction of $E$ to $X_i$. Notice that
$B$ is a localization of $A$ and a homomorphism on the right hand side is determined by its value at $1$, which has the form $m\otimes b$, $b\in B$, where $m$ is annihilated by differential operators $D$ with $D(1)=0$. Now this homomorphism corresponds to the homomorphism on the left hand side, which sends 1 to $m$, tensored with $b$.

ii) Let $(U_\alpha)_{\alpha\in I}$ be an open cover of $X$ by affine subsets. Then 
$$E^\nabla(U_\alpha)= \bigcap_i E^{(i)}(U_\alpha).$$
Since $f_*$ preserves inclusion, our
claim follows.

iii) It suffices to verify the claim for an affine map $f:X\to S$ with $S$ affine. Thus $S=\spec A$, $X=\spec B$ with $B$ \'etale over $A[x_1,\ldots,x_k]$. Further, by localization, we can assume that $\Diff(X/S)$ is generated over $B$ by the set $\{D_{\bf n}, \bf n\geq \bf 0\}$. Let $(E,\nabla)$ be a stratified sheaf on $X$. The inclusion $E^\nabla\to E$ is $A$-linear hence induces the $B$-linear map $B\otimes_AE^\nabla\to E$. We claim that this map is injective. Indeed, assume that $\sum_{\bf n} x^{\bf n}e_{\bf n}=0$ for some $e_{\bf n}\in E^\nabla$. Let $\bf n_0$ be maximal with respect to the lexicographic order among the indexes in this summation, ${\bf n}_0>\bf 0$. Then $D_{{\bf n}_0}(x^{\bf n})=\delta^{\bf n}_{{\bf n}_0}$ for all $\bf n$ appearing in the summation. Hence we have
$$(D_{{\bf n}_0})_{x^{\bf n}}(1)=[D_{{\bf n}_0}, t_{x_{\bf n}}](1)=\delta_{\bf n}^{{\bf n}_0}.$$
Thus, by the definition of $E^\nabla,$ $(D_{{\bf n}_0})_{x^{\bf n}}(e)=0$ for all ${\bf n}\neq {\bf n}_0$ and $e\in E^\nabla$. On the other hand, since $(D_{{\bf n}_0})_{x^{\bf n}_0}(1)=1$,
in the expression of $(D_{{\bf n}_0})_{x^{{\bf n}_0}}$ in the basis $(D_{\bf n})$, the coefficient of $D_{\bf 0}$ is non-zero. Consequently, we have
\begin{eqnarray*}0=D_{{\bf n}_0}\left(\sum_{\bf n} x^{\bf n}e_{\bf n}\right)&=&
\sum_{\bf n} x^{\bf n} D_{{\bf n}_0}(e_{\bf n})+
(D_{{\bf n}_0})_{x^{\bf n}}(e_n)\\
&=& (D_{{\bf n}_0})_{x^{{\bf n}_0}}(e_{{\bf n}_0})\neq 0,
\end{eqnarray*}
contradiction.  

Thus the map $B\otimes_AE^\nabla\to E$ is injective, hence $B\otimes_AE^\nabla$ is coherent as a $B$-module. We show that $E^\nabla$ is coherent as an $A$-module. Indeed, let $(e_i)$ be a generating set for $B\otimes_AE^\nabla$ over $B$. Write  $e_i=\sum_jb_{ji}\otimes g_j$ with $g_j\in E^\nabla$ we see that $(g_j)$ is also a generating set for $B\otimes_AE^\nabla$ over $B$. Since $B$ is faithfully flat over $A$, $(g_j)$ has to be a generating set for $E^\nabla$ over $A$.
\end{proof}


 \section{The  Gau\ss-Manin stratified sheaf}
Assume now that $f:X\to S$ is a smooth map of smooth, connected schemes over $k$, where $k$ is an algebraically closed field.
Further we assume that $f$ has connected fibers in the sense that $f_*\sO_X=\sO_S$. The first aim of this section is to show that $f_*$ yields a
functor from $\Str(X/k)\to \Str(S/k)$. Then we will show the base change property of this functor when $f$ is proper.

A stratified bundle on $X/k$ is referred to as an absolute one, while a stratified bundle on $X/S$ is referred to as a relative one. Given an absolute stratified bundle $(E,\nabla)$ on $X/k$
we can always consider it as a relative one: $(E,\nabla_{X/S})$, where $\nabla_{X/S}$ is the restriction of $\nabla$ to $\Diff(X/S)$. We can define an action of $\Der(S/k)$ on $E^{\nabla_{X/S}}$ as follows:
Consider the exact sequence of sheaves on $X$
$$0\to \Der(X/S)\to \Der(X/k)\to \sO_X\otimes \Der(S/k)\to 0.$$
Choose local coordinates $s_j$ on $S/k$ and their liftings $t_j$ to $X$ such that
$\partial_{t_j}$ kill $dx_i$, where $dx_i$ are as in the proof of Lemma \ref{h0_lexact}. Consequently $\partial_{t_j}$ commute with 
$\partial_{x_i}$ and more generally with all $D_{\mathbf n}$, $\mathbf n>0,$ given in \eqref{basis}. The liftings $t_j$ define a section to the above sequence, denoted by
$\tau$. For each $D_S\in \Der(S/k)$ we define its action on $E^{\nabla_{X/S}}$ as the action of $\tau(D_S)$
$$\nabla_S(D_S)(e)=\nabla(\tau(D_S))(e).$$

\begin{lem}\label{lem1} The subsheaf $E^{\nabla_{X/S}}$ is stable under the action
of $\nabla_S(D_S)$ and this action is independent of the choice of the lifting.
\end{lem}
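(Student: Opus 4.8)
The plan is to reduce both assertions to the algebra $\Diff(X/k)$, exploiting that $\nabla$ is an algebra homomorphism. Since the statement is local on $X$, I would work in the coordinates $x_1,\dots,x_k$ on $X/S$ together with the lifted coordinates $t_j$ fixed above, so that $\Diff(X/S)$ has the local left $\sO_X$-basis $\{D_{\mathbf n}\}$ of \eqref{basis} and $\tau(D_S)=\sum_j a_j\partial_{t_j}$ with the $a_j$ pulled back from $S$. First I would record the local description of the horizontal subsheaf: a relative operator $D$ with $D(1)=0$ is an $\sO_X$-combination of the $D_{\mathbf n}$ with $\mathbf n>0$, since $D_{\mathbf 0}=\mathrm{id}$ is the only basis element not killing $1$. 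Hence
$$E^{\nabla_{X/S}}=\bigl\{e\in E \;\bigm|\; \nabla(D_{\mathbf n})(e)=0 \text{ for all } \mathbf n>0\bigr\},$$
so to prove stability it suffices to show $\nabla(D_{\mathbf n})\nabla(\tau(D_S))(e)=0$ for every $\mathbf n>0$ and every $e\in E^{\nabla_{X/S}}$.

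For stability, since $\nabla$ is an algebra homomorphism I would rewrite $\nabla(D_{\mathbf n})\nabla(\tau(D_S))=\nabla\bigl(D_{\mathbf n}\,\tau(D_S)\bigr)$ and aim to commute the two operators inside $\Diff(X/k)$. Here I would use two facts: the operator $D_{\mathbf n}$ is $\sO_S$-linear, hence commutes with multiplication by each $a_j$; and by the very choice of the liftings $t_j$ the operator $\partial_{t_j}$ commutes with every $D_{\mathbf n}$, $\mathbf n>0$. Together these give $[D_{\mathbf n},\tau(D_S)]=0$ in $\Diff(X/k)$, whence
$$\nabla(D_{\mathbf n})\nabla(\tau(D_S))(e)=\nabla(\tau(D_S))\nabla(D_{\mathbf n})(e)=0 \quad\text{for } e\in E^{\nabla_{X/S}},$$
which is the required stability.

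For independence of the lifting, I would observe that any two sections $\tau,\tau'$ of the displayed exact sequence differ by a map landing in the kernel $\Der(X/S)$, so that $\tau(D_S)-\tau'(D_S)$ is a relative derivation $D_{\mathrm{rel}}\in\Der(X/S)$. Being a derivation it satisfies $D_{\mathrm{rel}}(1)=0$, so by the definition of $E^{\nabla_{X/S}}$ it annihilates this subsheaf, giving $\nabla(\tau(D_S))(e)=\nabla(\tau'(D_S))(e)$ for every $e\in E^{\nabla_{X/S}}$. The only genuine computation is the commutation relation $[D_{\mathbf n},\tau(D_S)]=0$ of the first part; everything hinges on the compatibility built into the liftings $t_j$ (that $\partial_{t_j}$ kill the $dx_i$ and therefore commute with all the $D_{\mathbf n}$), and I expect this to be the one step requiring care.
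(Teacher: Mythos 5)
Your proof is correct and takes essentially the same route as the paper: both reduce stability to the commutation $[D_{\mathbf n},\tau(D_S)]=0$ in $\Diff(X/k)$, using the $\sO_S$-linearity of the relative operators together with the fact (built into the choice of the liftings $t_j$) that $\partial_{t_j}$ commutes with all $D_{\mathbf n}$, $\mathbf n>0$; and both dispose of the independence claim by noting that two liftings differ by an element of $\Der(X/S)$, which kills $E^{\nabla_{X/S}}$. Your write-up is merely a more explicit version of the paper's terse argument, spelling out the reduction to the basis $\{D_{\mathbf n}\}_{\mathbf n>0}$.
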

\begin{proof}
The first claim amounts to saying that for a section $e$ of $E^{\nabla_{X/S}}$,
$$\nabla(D)\nabla_S(D_S)(e)=0,$$
for any $D\in \Diff(X/S)$, $D(1)=0$, and any $D_S\in \Der(S/k)$. Assume $D_S=\sum p_j\partial_{s_j}$,
$p_j\in\sO_S$. Then
$$\tau(D_S)=\sum_jp_j\partial_{t_j}.$$
Since $\partial_{x_i}$ and $\partial_{t_j}$ commute and since $\partial_{x_i}$ are
$\sO_S$-linear the conclusion follows at once.
The independence of the choice of the liftings is obvious as any two choices differ
by elements of $\Der(X/S)$ which act as zero on $E^{\nabla_{X/S}}$.
\end{proof}

\begin{thm}\label{thm1} Assume that $f:X\to S$ is a smooth map of smooth, connected schemes over a field $k$.
Let $(E,\nabla)$ be a stratified sheaf on $X/k$.
Then 
\begin{enumerate}
\item $f_*(E^{\nabla_{X/S}})$ is equipped with a stratification, denoted by $f_*\nabla$.
\item If $E$ is coherent (hence locally free) as an $\sO_X$-module then $f_*(E^{\nabla_{X/S}})$    is also locally free.\end{enumerate}\end{thm}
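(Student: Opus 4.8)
The plan is to prove (1) by upgrading the Gau\ss-Manin connection of Lemma~\ref{lem1} to a full action of $\Diff(S/k)$ on $f_*(E^{\nabla_{X/S}})$, and to deduce (2) from the coherence statement of Lemma~\ref{h0_lexact}(iii) together with the fact that a coherent stratified sheaf on a smooth $k$-scheme is locally free.

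For (1) I would argue locally on $S$ and, as before Lemma~\ref{lem1}, choose relative coordinates $x_1,\dots,x_k$ on $X/S$ and liftings $t_j=f^*s_j$ of coordinates $s_1,\dots,s_l$ of $S/k$, so that $(x,t)$ is a coordinate system on $X/k$ and $\partial_{t_j}(x_i)=0$. In the associated left $\sO_X$-basis $D_{(\mathbf a,\mathbf b)}$ of $\Diff(X/k)$ (with $\mathbf a$ the $x$-indices and $\mathbf b$ the $t$-indices) the relative operators are precisely the $D_{(\mathbf n,\mathbf 0)}$, and the multiplication rule \eqref{multip_rule} yields the two identities on which everything rests: $D_{(\mathbf n,\mathbf 0)}$ and $D_{(\mathbf 0,\mathbf m)}$ commute, and $D_{(\mathbf a,\mathbf b)}=D_{(\mathbf 0,\mathbf b)}D_{(\mathbf a,\mathbf 0)}$. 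For a local section $D_S=\sum_{\mathbf m}g_{\mathbf m}D^S_{\mathbf m}$ of $\Diff(S/k)$ I would then set $(f_*\nabla)(D_S):=\nabla\bigl(\sum_{\mathbf m}(f^*g_{\mathbf m})\,D_{(\mathbf 0,\mathbf m)}\bigr)$, i.e.\ act through the lift of $D_S$ in the $t$-directions.

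Three points make this a stratification. It preserves $E^{\nabla_{X/S}}$: for $\mathbf n\neq\mathbf 0$ and $e$ relative-horizontal, $\nabla(D_{(\mathbf n,\mathbf 0)})\nabla(D_{(\mathbf 0,\mathbf m)})(e)=\nabla(D_{(\mathbf 0,\mathbf m)})\nabla(D_{(\mathbf n,\mathbf 0)})(e)=0$ by commutation and relative horizontality. It is an algebra homomorphism sending $g\in\sO_S$ to multiplication by $g$, because the $D_{(\mathbf 0,\mathbf m)}$ obey the same multiplication rule \eqref{multip_rule} as the $D^S_{\mathbf m}$ and $\nabla$ is an algebra homomorphism. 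The delicate point, which I expect to be the main obstacle, is independence from the auxiliary coordinates, i.e.\ that the local formulas glue to a global action on the $\sO_S$-module $f_*(E^{\nabla_{X/S}})$. Here the two identities do the work: operators $D_{(\mathbf a,\mathbf b)}$ with $\mathbf a\neq\mathbf 0$ annihilate $f^*\sO_S$, so the $D_{(\mathbf 0,\ast)}$-component of any lift (in a fixed basis) is pinned down by the common restriction $f^*\circ D_S$ to $f^*\sO_S$; hence the difference of two lifts is an $\sO_X$-combination of operators $D_{(\mathbf a,\mathbf b)}$ with $\mathbf a\neq\mathbf 0$, each of which factors as $D_{(\mathbf 0,\mathbf b)}D_{(\mathbf a,\mathbf 0)}$ and so annihilates $E^{\nabla_{X/S}}$. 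Thus two lifts act identically, generalizing the order-one independence of Lemma~\ref{lem1}. (The multiplication rule also explains why one gets a stratification and not merely a flat connection: for instance $\nabla(\partial_{t_j})^{p}=\nabla(\partial_{t_j}^{\,p})=\nabla(p!\,D_{(\mathbf 0,p\mathbf e_j)})=0$, so the underlying connection $\nabla_S$ has vanishing $p$-curvature.)

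For (2), observe that $f_*(E^{\nabla_{X/S}})=H^0_{\rm str}(X/S,(E,\nabla_{X/S}))$ in the notation of Definition~\ref{h0}. If $E$ is coherent, Lemma~\ref{h0_lexact}(iii) shows this $\sO_S$-module is coherent; by part~(1) it carries a stratification, and a coherent stratified sheaf on the smooth $k$-scheme $S$ is locally free (\cite[Note~2.17]{BO}). Hence $f_*(E^{\nabla_{X/S}})$ is locally free, which completes the plan.
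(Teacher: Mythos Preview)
Your argument for (ii) is exactly the paper's: coherence from Lemma~\ref{h0_lexact}(iii) plus the fact that coherent stratified sheaves on smooth $k$-schemes are locally free.

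For (i), your approach is correct but genuinely different from the paper's. The paper never lifts higher-order operators. It uses only Lemma~\ref{lem1}, which gives a \emph{connection} $\nabla_S$ on $f_*(E^{\nabla_{X/S}})$, and then iterates via Frobenius: the sections horizontal for $\nabla_S$ are precisely those killed by all of $\Der(X/k)$, hence lie in $E^{(1)}$; replacing $(E,\nabla)$ by $(E^{(1)},\nabla^{(1)})$ over $X^{(1)}/S^{(1)}$ and repeating produces a connection on $f^{(1)}_*\bigl((E^{(1)})^{\nabla_{X^{(1)}/S^{(1)}}}\bigr)$, and so on. The tower of connections so obtained is exactly the flat-sheaf datum of Theorem~\ref{thm:k-g}(iv), hence a stratification on $f_*(E^{\nabla_{X/S}})$.

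Your direct route---lifting each $D_S\in\Diff(S/k)$ through the splitting $D_{(\mathbf a,\mathbf b)}=D_{(\mathbf 0,\mathbf b)}D_{(\mathbf a,\mathbf 0)}$ and using the commutation of the two factors---is more elementary in that it avoids any appeal to Theorem~\ref{thm:k-g} and makes the $\Diff(S/k)$-module structure visible in one step. The paper's Frobenius-descent argument, by contrast, never needs coordinate computations beyond first order and packages the characteristic-$p$ content (why one gets a stratification rather than just a flat connection) into the already-proved Theorem~\ref{thm:k-g}. One small suggestion on your independence step: it is cleanest to say that any two lifts of $D_S$ agree on $f^*\sO_S$, hence (expanding in the basis $D_{(\mathbf a,\mathbf b)}$ and evaluating on the monomials $t^{\mathbf m}$) their difference has vanishing $D_{(\mathbf 0,\ast)}$-part and so lies in the left ideal $\Diff(X/k)\cdot\Diff^{+}(X/S)$, which annihilates $E^{\nabla_{X/S}}$ by definition; your factorisation identity makes this explicit.
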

 \begin{proof} i) By Lemma \ref{lem1}, we have an action of $\Der(S/k)$ on $E^{\nabla_{X/S}}$ and hence on $f_*(E^{\nabla_{X/S}})$. Notice that the horizontal sections in $E^{\nabla_{X/S}}$ under this action of $\Der(S/k)$ are exactly those sections killed by $\Der(X/k)$, since they are already killed by $\Der(X/S)$. Thus these sections form a subsheaf of the sheaf $E^{(1)}:=\ker \nabla^{(0)}\subset E$, hence the sections in $f_*(E^{\nabla_{X/S}})$, killed by $\Der(S/k)$ form a subsheaf of $f_*(E^{(1)})$.
  
  We replace $E$ by $E^{(1)}$ in our discussion. Then the stratification $ {\nabla}$ on $E$ induces a stratification  on $E^{(1)}$ and repeating the above process we obtain an action of $\Der(S^{(1)}/k)$ on
 $(E^{(1)})^{\nabla_{X^{(1)}/S^{(1)}}}$... 
 Eventually we obtain a stratification on $f_*(E^{\nabla_{X/S}})$.
 
 ii) According to Lemma \ref{h0_lexact}, if $E$ is coherent then $f_*(E^{\nabla_{X/S}})$ is also coherent, hence locally free.
\end{proof}

Thus we have a left exact functor $\Str(X/k)\to \Str(S/k)$  called the $0$-th Gau\ss-Manin stratification functor. The right derived functors of this functor are called (higher) Gau\ss-Manin stratification functors. The theorem above shows that the $0$-th Gau\ss-Manin stratification of an $\sO_X$-coherent stratified sheaf on $X/k$ is $\sO_S$ coherent. This fact also holds for the relative setting: $X\xrightarrow f S\xrightarrow g T$, $f,g$ being smooth maps. The higher Gau\ss-Manin stratifications seem to be an interesting object to study. 

Assume now that $f$ is a smooth and proper morphism. In what follows we shall study the base change property for the $0$-th stratification and apply it to the homotopy sequence of stratified fundamental groups.

\noindent{\bf Notation}. For a $k$-rational point  $s:\spec k\to S$  we fix the following notations: \\
\indent \begin{tabular} {l}
 $f_s:X_s:=X\times_Ss\to \text{Spec} k$, the fiber of $f$ at $s$;\\
$S_{(s)}:=\text{Spec }\sO_{S,s}$, where $\sO_{S,s}$ the local ring at $s$;\\
  $f_{(s)}:X_{(s)}:= X\times_S S_{(s)}\to S_{(s)}$, the pull-back of $f$ to
$S_{(s)}$.
\end{tabular}
$$\xymatrix{
X_s\ar[r]\ar[d]_{f_s} & X_{(s)}\ar[r]\ar[d]^{f_{(s)}}& X\ar[d]^f\\
k\ar[r]_s& S_{(s)}\ar[r] & S
}$$

 Let $(E,\nabla)$ be a stratified bundle on $X/k$. Consider it as a relative stratified bundle $(E,\nabla_{X/S})$. For simplicity we shall usually omit the
 stratification $\nabla$ when no confusion may arise. 
 Denote $E_{s}$ the pull-back of $E$ to $X_{s}$ and $E_{(s)}$ the pull-back of $E$ to $X_{(s)}$.
 Then $E_{s}$ is a stratified bundle on $X_{s}/k$ and $E_{(s)}$ is a stratified bundle on $X_{(s)}/S_{(s)}$.
 In what follows we will also use freely the alternative description of $(E,\nabla_{X/S})$ as a sequence of vector bundles $E=(E^{(i)})$ given in Theorem \ref{thm:k-g}.

 \begin{lem}\label{lem:usc} Assume that $k=\bar k$. Then the function
 $$h^0_{\rm str}(X_s, E_s):=\dim_k H^0_{\rm str}(X_s, E_s)$$
 in the variable $s\in S(k)$ is upper semi-continuous.
 \end{lem}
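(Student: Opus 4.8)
The plan is to reduce the statement to the classical semicontinuity (Grauert) theorem for the proper morphism $f$, applied simultaneously to all the Frobenius descents $E^{(i)}$, and then to recombine the resulting countably many semicontinuous functions by an elementary infimum argument.

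First I would establish the compatibility of the Frobenius descent with the base change $s\colon \spec k\to S$: for each $i$ there is a canonical isomorphism $(E^{(i)})_s\cong (E_s)^{(i)}$, where the left side is the restriction to the fibre $X^{(i)}_s$ of the relative descent $E^{(i)}$ on $X^{(i)}$, and the right side is the $i$-th descent of the stratified bundle $E_s$ on $X_s/k$. This rests on the fact that the relative Frobenius commutes with base change, so that the defining isomorphisms $\sigma_i\colon F^*_{X^{(i)}/S}E^{(i)}\xrightarrow{\sim} E^{(i-1)}$ of Theorem \ref{thm:k-g}(iv) pull back along $s$ to isomorphisms $F^*_{X_s^{(i)}/k}(E^{(i)})_s\xrightarrow{\sim}(E^{(i-1)})_s$, under the identification $X^{(i)}_s\cong (X_s)^{(i)}$. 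The pulled-back data form a flat sheaf on $X_s$, and by the uniqueness clause of Theorem \ref{thm:k-g}(iv) its associated stratification is the pull-back of $\nabla$ to $X_s$; hence $(E^{(i)})_s\cong (E_s)^{(i)}$, and the two underlying spaces $X^{(i)}_s$ and $X_s$ being homeomorphic, the global sections of both sheaves may be regarded as subspaces of $H^0(X_s,E_s)$.

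Next I would rewrite $h^0_{\rm str}$ as an infimum of ordinary cohomological dimensions. Applying Lemma \ref{h0_lexact}(ii) to $X_s/k$ together with the previous step gives
$$H^0_{\rm str}(X_s,E_s)=\bigcap_i H^0\!\left(X_s,(E^{(i)})_s\right).$$
Since the $E^{(i)}$ form a descending chain $E=E^{(0)}\supseteq E^{(1)}\supseteq\cdots$ of $f^{-1}\sO_S$-subsheaves of $E$ and $H^0$ is left exact, the $H^0(X_s,(E^{(i)})_s)$ form a descending chain of subspaces of the finite-dimensional space $H^0(X_s,E_s)$; such a chain stabilizes, so its intersection is its minimal term. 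Writing $h_i(s):=\dim_k H^0(X_s,(E^{(i)})_s)$, this yields $h^0_{\rm str}(X_s,E_s)=\inf_i h_i(s)$.

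Finally, for each fixed $i$ the projection $X^{(i)}\to S$ is proper, being a base change of $f$ along $F_S^{\,i}$, and $E^{(i)}$ is coherent; so the classical semicontinuity theorem shows that $s\mapsto h_i(s)$ is upper semicontinuous, i.e.\ the set $\{s: h_i(s)\geq n\}$ is closed for every $n$. Consequently
$$\{s: h^0_{\rm str}(X_s,E_s)\geq n\}=\bigcap_i\{s: h_i(s)\geq n\}$$
is an intersection of closed subsets of $S$, hence closed, which is precisely the asserted upper semicontinuity. The one genuinely substantive step is the base change isomorphism of the second paragraph; once it is available the infimum-of-semicontinuous-functions argument is formal, the essential point being that the intersection of the closed loci remains closed even though infinitely many indices $i$ intervene.
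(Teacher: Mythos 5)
Your proof is correct and follows essentially the same route as the paper's: both express $H^0_{\rm str}(X_s,E_s)$ as the stabilizing intersection $\bigcap_i H^0(X_s,E_s^{(i)})$ of finite-dimensional spaces and then invoke the classical semicontinuity theorem for the coherent sheaves $E^{(i)}$ under the proper map. The only presentational differences are that the paper argues pointwise, choosing the stabilization index $d(s)$ and a neighborhood $U(s)$ and using $h^0_{\rm str}(X_t,E_t)\leq h^0(X_t,E^{(d)}_t)$, whereas you package the same facts as the formal statement that an infimum of upper semicontinuous functions is upper semicontinuous, and that you make explicit the base-change compatibility $(E^{(i)})_s\cong (E_s)^{(i)}$ which the paper uses implicitly.
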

 \begin{proof}
By definition we have
\begin{equation}\label{eq:usc1}
H^0_{\rm str}(X_s,E_s)=\bigcap_{i=0}^\infty H^0(X_s,E_s^{(i)}).
\end{equation}
Since 
$$H^0(X_s,E_s^{(0)})\supset H^0(X_s,E_s^{(1)})\supset \ldots $$ are vectors spaces of finite dimension
this sequence stabilizes: there exists $d=d(s)$ such that
\begin{equation}\label{eq:ds}
H^0(X_s,E_s^{(i)})= H^0(X_s,E_s^{(i+1)})
\end{equation} 
for all $i\geq d$. According to the upper continuity of $H^0(X_s,E_s)$ for each coherent sheaf $E$ on $X$
\cite[Thm~7.7.5]{EGA3}, there exists an open neighborhood $U=U(s)$ of $s$, such that for all $t\in U$,
$$h^0(X_s,E_s^{(d)})\geq h^0(X_t,E_t^{(d)}).$$
Hence 
\begin{equation}\label{eq9}
h_{\rm str}^0(X_s,E_s)=h^0(X_s,E^{(d)}_s)\geq 
h^0(X_t,E^{(d)}_t)\geq h_{\rm str}^0(X_t,E_t).
\end{equation}
\end{proof}

\begin{cor}\label{cor:usc}
There exists $d>0$ and an open subset $U$ of $S$ such that $h^0_{\rm str}(X_t, E_t)$ is a constant function in $t$ on $U$ and
that the equality in \eqref{eq:ds} holds for all $t$ in $U$ and all $i\geq d$.
\end{cor}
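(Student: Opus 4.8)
The plan is to combine the upper semi-continuity from Lemma \ref{lem:usc} with the pointwise stabilization recorded in \eqref{eq:ds}, the key being to show that a single stabilization index chosen at a minimizing point works uniformly on a neighbourhood. Since $h^0_{\rm str}(X_s,E_s)$ is a non-negative-integer-valued function on $S(k)$, its infimum is attained; I would fix a point $s_0$ realizing the minimum $m:=\min_{s}h^0_{\rm str}(X_s,E_s)$. Applying the stabilization \eqref{eq:ds} at $s_0$ yields an integer $d=d(s_0)$ with $H^0(X_{s_0},E_{s_0}^{(i)})=H^0(X_{s_0},E_{s_0}^{(i+1)})$ for all $i\geq d$, whence $h^0(X_{s_0},E_{s_0}^{(d)})=h^0_{\rm str}(X_{s_0},E_{s_0})=m$. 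This $d$ is the candidate I will show serves on an entire open set.

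Next I would invoke the upper semi-continuity of $t\mapsto h^0(X_t,E_t^{(d)})$ for the single coherent sheaf $E^{(d)}$, i.e. \cite[Thm~7.7.5]{EGA3}, to produce an open neighbourhood $U$ of $s_0$ on which $h^0(X_t,E_t^{(d)})\leq h^0(X_{s_0},E_{s_0}^{(d)})=m$. Combining this with the inclusion $H^0_{\rm str}(X_t,E_t)=\bigcap_i H^0(X_t,E_t^{(i)})\subseteq H^0(X_t,E_t^{(d)})$ and with the global minimality of $m$ gives, for every $t\in U$, the sandwich
$$m\geq h^0(X_t,E_t^{(d)})\geq h^0_{\rm str}(X_t,E_t)\geq m,$$
which is precisely the chain \eqref{eq9}. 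Therefore all three quantities equal $m$ on $U$; in particular $h^0_{\rm str}$ is constant equal to $m$ throughout $U$.

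Finally I would promote this equality of dimensions to the stabilization asserted in the statement. For each $t\in U$ the groups $H^0(X_t,E_t^{(d)})\supseteq H^0(X_t,E_t^{(d+1)})\supseteq\cdots$ form a decreasing chain of finite-dimensional $k$-vector spaces whose intersection is $H^0_{\rm str}(X_t,E_t)$; since the outermost term and the intersection both have dimension $m$, every inclusion in the chain must be an equality, which is exactly \eqref{eq:ds} for all $i\geq d$ and all $t\in U$.

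The step that requires genuine care—and the only place the argument uses global minimality rather than a purely local comparison—is the closing lower bound $h^0_{\rm str}(X_t,E_t)\geq m$ in the sandwich. Without it, semi-continuity would only bound $h^0(X_t,E_t^{(d)})$ from above and could not rule out the stratified invariant dropping at nearby points while the chain fails to stabilize at the chosen $d$. Once minimality is in force no further semi-continuity input is needed, and the same $d=d(s_0)$ works simultaneously at every point of $U$.
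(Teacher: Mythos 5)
Your proposal is correct and follows essentially the same route as the paper: pick a point where $h^0_{\rm str}(X_s,E_s)$ attains its minimum, take $d=d(s)$ and the neighbourhood $U$ furnished by the semi-continuity argument of Lemma \ref{lem:usc}, and use minimality to force equalities throughout the chain \eqref{eq9}, which then propagates the stabilization \eqref{eq:ds} to every $t\in U$. Your explicit remark that the global minimality supplies the crucial lower bound $h^0_{\rm str}(X_t,E_t)\geq m$ is exactly the point the paper's terser proof relies on.
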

\begin{proof}
Choose $s$ such that $h^0_{\rm str}(X_s, E_s)$ attains it minimum and $U=U(s)$ as in the proof of the above lemma.
Then $h^0_{\rm str}(X_t, E_t)$ is a constant function on $U$. Let $d=d(s)$ as in the above proof. Then we have equalities in \eqref{eq9} for all $t\in U$. Hence the equality in \eqref{eq:ds} should hold for $s$ replaced by $t$ for all $t\in U$ and $i\geq d$.
\end{proof}

\begin{cor}\label{cor:base_change}
With $U$, $d$ as in the above corollary, we have
$$f_{(t)*}((E^{\nabla_{X/S}}){}_{(t)})=f_{(t)*}(E_{(t)}^{(d)}),$$
and hence base change holds
$$ H^0_{\rm str}(X_t,E_t)=  f_{(t)*}(E^{\nabla_{X/S}}{}_{(t)})\otimes_{\sO_{S,t}}k,$$
for all $t\in U$.
\end{cor}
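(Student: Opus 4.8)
The plan is to reduce everything to the single sheaf $E^{(d)}$ together with the theorem on cohomology and base change. Throughout I regard each $E^{(i)}$ as the locally free sheaf it is on $X^{(i)}$ (the Remark following Theorem \ref{thm:k-g}); since the structure morphism $X^{(i)}\to S$ is smooth and proper, $E^{(i)}$ is flat over $S$, and the canonical homeomorphism $X\to X^{(i)}$ lets me identify its pushforward with $f_*E^{(i)}$ and its fibre cohomology with $H^0(X_t,E^{(i)}_t)$, where $E^{(i)}_t=(E_t)^{(i)}$. By Definition \ref{h0} and Theorem \ref{thm:k-g}(i), $E^{\nabla_{X/S}}=\bigcap_i E^{(i)}$, a decreasing chain of $f^{-1}\sO_S$-subsheaves of $E$.

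First I would extract the numerical input. By Corollary \ref{cor:usc} the equality \eqref{eq:ds} holds on $U$ for every $i\geq d$, so inside $H^0(X_t,E_t)$ the descending chain of subspaces $H^0(X_t,E^{(i)}_t)$ is stationary from $i=d$ onwards; combined with \eqref{eq:usc1} this gives, for each $t\in U$,
$$H^0_{\rm str}(X_t,E_t)=H^0(X_t,E^{(d)}_t)=H^0(X_t,E^{(i)}_t)\quad(i\geq d),$$
so that $h^0(X_t,E^{(i)}_t)$ equals the constant value $r:=h^0_{\rm str}(X_t,E_t)$ on $U$ for all $i\geq d$. As $U$ is reduced (open in the smooth $S$) and each $E^{(i)}$ is $S$-flat with $f$ proper, the theorem on cohomology and base change applies: for $i\geq d$ the sheaf $f_*E^{(i)}|_U$ is locally free of rank $r$ and the base-change map $(f_*E^{(i)})\otimes k(t)\to H^0(X_t,E^{(i)}_t)$ is an isomorphism for every $t\in U$.

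Next I would promote these fibrewise equalities to an equality of sheaves. The inclusion $E^{(i+1)}\hookrightarrow E^{(i)}$ induces a morphism $f_*E^{(i+1)}\to f_*E^{(i)}$ of locally free $\sO_U$-modules of the same rank $r$; by functoriality of the base-change isomorphism its fibre at $t$ is the inclusion $H^0(X_t,E^{(i+1)}_t)\hookrightarrow H^0(X_t,E^{(i)}_t)$ of $r$-dimensional spaces, hence an isomorphism. A morphism of vector bundles on the reduced scheme $U$ that is an isomorphism on every fibre is an isomorphism, so $f_*E^{(i+1)}\cong f_*E^{(i)}$ on $U$ for all $i\geq d$. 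Flat base change along $S_{(t)}\to S$ gives $f_{(t)*}E^{(i)}_{(t)}=(f_*E^{(i)})|_{S_{(t)}}$ and $(E^{\nabla_{X/S}})_{(t)}=\bigcap_iE^{(i)}_{(t)}$; thus inside $f_{(t)*}E_{(t)}$ all the submodules $f_{(t)*}E^{(i)}_{(t)}$ with $i\geq d$ coincide with $f_{(t)*}E^{(d)}_{(t)}$. Since $f_{(t)*}$ is left exact it commutes with the intersection, and therefore
$$f_{(t)*}\big((E^{\nabla_{X/S}})_{(t)}\big)=\bigcap_i f_{(t)*}E^{(i)}_{(t)}=f_{(t)*}E^{(d)}_{(t)},$$
which is the first asserted equality. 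Tensoring with $k$ over $\sO_{S,t}$ and using base change for $E^{(d)}$ then yields the second: $f_{(t)*}E^{(d)}_{(t)}\otimes_{\sO_{S,t}}k=(f_*E^{(d)})\otimes k(t)=H^0(X_t,E^{(d)}_t)=H^0_{\rm str}(X_t,E_t)$.

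I expect the crux to be the passage from the fibrewise identities $H^0(X_t,E^{(i+1)}_t)=H^0(X_t,E^{(i)}_t)$ to the sheaf identity $f_*E^{(i+1)}=f_*E^{(i)}$ on $U$: a descending chain of coherent subsheaves need not stabilize, so one genuinely needs local freeness and the functorial compatibility of the base-change map with the inclusion in order to see that the injections of vector bundles are isomorphisms fibrewise and hence isomorphisms. The remaining ingredients---flat base change along $S_{(t)}\to S$, the compatibility of Frobenius descent with restriction to a fibre, and the left-exactness of $f_{(t)*}$ against the infinite intersection---are routine once $r$ is known to be locally constant.
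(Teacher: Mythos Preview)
Your argument is correct and follows essentially the same route as the paper: constancy of $h^0(X_t,E_t^{(i)})$ on $U$ for $i\ge d$ gives base change for each $E^{(i)}$, and then the fibrewise equalities force the inclusions $f_*E^{(i+1)}\hookrightarrow f_*E^{(i)}$ to be isomorphisms. The only cosmetic difference is that you first prove the global sheaf identity on $U$ (thereby obtaining Lemma~\ref{lem:fE} as a by-product) and then pass to $S_{(t)}$ by flat base change, whereas the paper works directly over the local ring $A=\sO_{S,t}$, sets $M^i=f_{(t)*}E^{(i)}_{(t)}$, and kills $M^i/M^{i+1}$ via Nakayama's lemma from $(M^i/M^{i+1})\otimes_Ak=0$. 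Your ``fibrewise-isomorphism of vector bundles is an isomorphism'' is of course the same Nakayama step in global dress.
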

\begin{proof} On $U$ the function $h^0(X_s,E_s^{(i)})$ is constant for all $i\geq d$, hence we have base change for all $E^{(i)}$, $i\geq d$, on $U$.

For a $t\in U$, let $A:=\sO_{S,t}$ and $m$ its maximal ideal. 
  Denote
$$M^{i+1}:=f_{(t)*}(E^{(i+1)}_{(t)})\subset f_{(t)*}(E^{(i)}_{(t)})=:M^{i}.$$
By base change, for $i\geq d$ the inclusion
$$M^{i+1}\otimes_Ak=H^0(X_t,E^{(i+1)}_t)\hookrightarrow H^0(X_t,E^{(i)}_t)=M^i\otimes_Ak$$
is an isomorphism, which implies
$$(M^i/M^{i+1})\otimes_Ak=0,$$
forcing $M^i/M^{i+1}=0$ by  Nakayama's Lemma.
\end{proof}
\begin{lem}\label{lem:fE}
When restricted to the open set $U\subset S$, defined as in Corollary \ref{cor:usc}, we have
$$f_*(E^{\nabla_{X/S}})=f_*(E^{(d)}).$$
\end{lem}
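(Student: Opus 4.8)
The plan is to realize $f_*(E^{\nabla_{X/S}})$ as the intersection of the decreasing family of pushforwards $f_*E^{(i)}$ and to show that, over $U$, this intersection already stabilizes at the index $d$. Recall from Theorem~\ref{thm:k-g} that the subsheaves satisfy $E^{(0)}\supset E^{(1)}\supset\cdots$, since the defining condition for $E^{(i)}$ (annihilation by every $D$ with $D(1)=0$ and $\mathrm{ord}(D)<p^i$) grows more restrictive with $i$; consequently the $f_*E^{(i)}\subset f_*E$ form a decreasing chain of $\sO_S$-submodules. By Lemma~\ref{h0_lexact}~(ii) one has $f_*(E^{\nabla_{X/S}})=\bigcap_i f_*E^{(i)}=\bigcap_{i\geq d} f_*E^{(i)}$, so the assertion reduces to showing that the inclusions $f_*E^{(i+1)}\hookrightarrow f_*E^{(i)}$ become equalities over $U$ for every $i\geq d$.

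The next step is to feed the stalkwise information of Corollary~\ref{cor:base_change} into this reduction. For a point $t\in U$, let $g\colon S_{(t)}=\spec\sO_{S,t}\to S$ be the canonical localization morphism, which is flat. Flat base change along $g$, applied to the cartesian square defining $X_{(t)}$, identifies $f_{(t)*}(E^{(i)}_{(t)})$ with $g^*f_*E^{(i)}$, whose module of sections over $S_{(t)}$ is exactly the stalk $(f_*E^{(i)})_t$; in the notation of Corollary~\ref{cor:base_change} this stalk is $M^i$. That corollary (whose proof combines base change with Nakayama's lemma, using that $f_*E^{(i)}$ is coherent because $f$ is proper and $E$ coherent) yields $M^i=M^{i+1}$ for all $i\geq d$ and all $t\in U$. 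Hence the inclusion $f_*E^{(i+1)}\hookrightarrow f_*E^{(i)}$ is an isomorphism on every stalk over $U$, and a morphism of sheaves that is an isomorphism on all stalks of an open set is an isomorphism there; thus $f_*E^{(i+1)}|_U=f_*E^{(i)}|_U$ for $i\geq d$.

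To finish, I would use that restriction to the open set $U$ commutes with intersections of subsheaves of a fixed sheaf. Since all the terms $f_*E^{(i)}|_U$ with $i\geq d$ coincide, this gives
$$f_*(E^{\nabla_{X/S}})|_U=\bigcap_{i\geq d}(f_*E^{(i)}|_U)=f_*(E^{(d)})|_U,$$
which is the claimed equality. The one substantive point is the transition from the pointwise equalities $M^i=M^{i+1}$ of Corollary~\ref{cor:base_change} to an equality of sheaves on $U$; this is exactly what the flat-base-change identification $M^i=(f_*E^{(i)})_t$ provides, and once it is in place the remainder is formal bookkeeping with the decreasing filtration.
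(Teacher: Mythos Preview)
Your proof is correct and follows essentially the same approach as the paper's: both reduce the claim to showing that the chain $f_*E^{(i)}$ stabilizes at $i=d$ over $U$, and both deduce this from the stalkwise equalities $f_{(t)*}(E^{(i)}_{(t)})=f_{(t)*}(E^{(i+1)}_{(t)})$ established (via Nakayama) in Corollary~\ref{cor:base_change}. The paper's argument is simply terser, whereas you spell out the flat base change identification $(f_*E^{(i)})_t\cong f_{(t)*}(E^{(i)}_{(t)})$ explicitly.
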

\begin{proof}
For all $t\in U$ we have, by assumption,
$$\bigcap_{i=0}^\infty f_*(E^{(i)}_{(t)})=f_*(E^{(d)}_{(t)})$$
Hence
$$ f_*(E^{(d)}_{(t)})=f_*(E^{(d+1)}_{(t)})=\ldots,$$
for all $t\in U$. Hence $f_*(E^{(d)})=f_*(E^{(d+1)})=\ldots$, on $U$.
\end{proof}

For each $s\in S$, let $d(s)$ be the least integer such that the equality in \eqref{eq:ds} is fulfilled for $i\geq d(s)$. 
\begin{lem}\label{lem:bound_ds} The function $d(s)$ is bounded on $S$.
\end{lem}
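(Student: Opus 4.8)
The plan is to argue by Noetherian induction on closed subsets of $S$, with the inductive step furnished by the argument of Corollary~\ref{cor:usc} carried out over the closed subset in question rather than over $S$ itself. First I would record that the stopping index $d(s)$ depends only on the fibre $(X_s,E_s)$: it is read off from the descending chain $H^0(X_s,E^{(0)}_s)\supseteq H^0(X_s,E^{(1)}_s)\supseteq\cdots$, and is therefore insensitive to the base over which the family is regarded. Hence it suffices to bound $d$ on each member of a finite stratification of $S$.

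Next I would check that Lemma~\ref{lem:usc} and Corollary~\ref{cor:usc} remain available after base change along any closed immersion $Z\hookrightarrow S$ (taken, say, with its reduced structure). Their proofs rest only on the properness of $f$ and on the semicontinuity theorem \cite[Thm~7.7.5]{EGA3} applied to the sheaves $E^{(i)}$; and since each $E^{(i)}$ is locally free on $X^{(i)}$ while $X^{(i)}\to S$ is flat (being a base change of the flat $f$), the sheaf $E^{(i)}$ is flat over $S$. Both the properness of the projection $X\times_S Z\to Z$ and the flatness of the pulled back $E^{(i)}$ survive this base change, and the fibres over points of $Z$ are unchanged. Thus the verbatim argument shows that $s\mapsto h^0_{\rm str}(X_s,E_s)$ is upper semicontinuous on $Z$ and produces a nonempty open subset $V\subseteq Z$ together with an integer $d_Z$ such that $d(s)\le d_Z$ for all $s\in V$.

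Finally I would run the induction. Let $\mathcal F$ be the collection of closed subsets of the Noetherian scheme $S$ on which $d$ is unbounded, and suppose $\mathcal F\neq\varnothing$, so that it has a minimal element $Z_0$. The previous step bounds $d$ by $d_{Z_0}$ on a nonempty open $V\subseteq Z_0$, while minimality of $Z_0$ bounds $d$ by some $d'$ on the proper closed subset $Z_0\setminus V$; hence $d\le\max(d_{Z_0},d')$ on all of $Z_0$, contradicting $Z_0\in\mathcal F$. Therefore $\mathcal F=\varnothing$, and taking the whole space shows $d(s)$ is bounded on $S$. The hard part, where I expect to concentrate the care, is the second step: one must be certain that Corollary~\ref{cor:usc} genuinely survives passage to a possibly singular and disconnected $Z$. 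This reduces to two points, namely that the fibre $(E^{(i)})_s$ of the $S$-relative piece agrees with the intrinsic piece $E^{(i)}_s$ of $(X_s,E_s)$---which follows from the compatibility with base change of the kernels $\ker\nabla^{(i-1)}$ and of the relative Frobenius appearing in Theorem~\ref{thm:k-g}---and that an integer-valued upper semicontinuous function on a Noetherian space attains its minimum on a nonempty open set, so that neither smoothness nor connectedness of $Z$ is required.
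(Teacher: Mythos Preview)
Your proof is correct and follows essentially the same approach as the paper: both arguments bound $d(s)$ on a nonempty open subset via Corollary~\ref{cor:usc} and then recurse on the closed complement, with termination coming from Noetherianity. The only cosmetic difference is that the paper stratifies the complement into finitely many smooth connected locally closed pieces and tracks a dimension drop, whereas you package the same descent as a cleaner Noetherian induction on arbitrary reduced closed subsets---and you correctly observe that smoothness and connectedness of the base play no role in the semicontinuity input.
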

\begin{proof}
As shown above, there exists an open $U\subset S$, such that on $U$ the function $d(s)$ is bounded. We stratify $V:=S\setminus U$ as a disjoint union of finitely many locally closed subset $V_i$, such that each $V_i$ is itself smooth and connected. Then on each $V_i$ we can find an open $U_i$ such that $d(s)$ is bounded on $U_i$. Repeat this process. As at each step the dimension of the complement strictly decreases, our procedure  terminates.
\end{proof}
\begin{prop}\label{prop:bound_d} For each absolute stratified bundle $(E,\nabla)$ on $X/k$ there exists a constant $d$ such that for all $i\geq d$ we have
$$f_*(E^{\nabla_{X/S}})=f_*(E^{(i)})$$
as vector bundles on $S$, for all $i\geq d$. In particular, for $i\geq d$, $f_*(E^{(i)})$ are locally free.
\end{prop}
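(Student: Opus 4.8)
The plan is to prove the statement by showing that the decreasing chain of coherent $\sO_S$-submodules $f_*(E^{(i)})\subseteq f_*(E)$ becomes stationary for $i$ large, and that its stationary value is forced to be $f_*(E^{\nabla_{X/S}})$. Indeed, by Lemma \ref{h0_lexact}(ii) we have $f_*(E^{\nabla_{X/S}})=\bigcap_i f_*(E^{(i)})$, so once the chain is constant from some index $d$ on, that constant value is automatically $f_*(E^{\nabla_{X/S}})$; and the latter is locally free by Theorem \ref{thm1}(ii), which gives the final assertion. Everything therefore reduces to the single claim that $f_*(E^{(i+1)})=f_*(E^{(i)})$ for all $i\geq d$, where $d$ is taken as in Corollary \ref{cor:usc}. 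With this choice Lemma \ref{lem:fE}, after flat base change to the generic point $\eta$ of $S$ (recall $S$ is integral, being smooth and connected over $k=\bar k$), shows that the chain of generic cohomologies $H^0(X_\eta,E^{(i)}_\eta)$ is already constant for $i\geq d$; this is the only fibrewise input I will use.

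First I would record a flatness statement for the successive quotients. Each $E^{(i)}$ is locally free on $X^{(i)}$ and $X^{(i)}\to S$ is smooth, so $E^{(i)}$ is $S$-flat; moreover the filtration $\cdots\subseteq E^{(i+1)}\subseteq E^{(i)}\subseteq E$ is the kernel filtration of the stratification (Theorem \ref{thm:k-g}) and is compatible with base change, so for every $s$ the map on fibres $E^{(i+1)}_s\to E^{(i)}_s$ is identified with the inclusion of kernel subsheaves of the fibre stratification on $E_s$, hence is injective. Writing $G^{(i)}:=E^{(i)}/E^{(i+1)}$ and using the long exact $\operatorname{Tor}$-sequence together with the $S$-flatness of $E^{(i)}$, one obtains $\operatorname{Tor}_1^{\sO_S}(G^{(i)},k(s))=\ker\!\bigl(E^{(i+1)}_s\to E^{(i)}_s\bigr)=0$ for all $s$. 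The local criterion of flatness then shows that $G^{(i)}$ is itself flat over $S$, and consequently the sequence $0\to E^{(i+1)}\to E^{(i)}\to G^{(i)}\to 0$ remains exact after restriction to any fibre.

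With this in hand I would analyse the pushed-forward sequence $0\to f_*(E^{(i+1)})\to f_*(E^{(i)})\xrightarrow{\ \delta\ } f_*(G^{(i)})$ and prove $\delta=0$ for $i\geq d$; that yields $f_*(E^{(i+1)})=f_*(E^{(i)})$ and completes the reduction. The vanishing is local on $S$, so fix an affine open $W=\spec A$ with $A$ a domain, and set $M:=\Gamma(f^{-1}W,G^{(i)})$. Since $G^{(i)}$ is $S$-flat, each stalk is torsion-free over the corresponding local ring of $A$, so $M$ is a torsion-free $A$-module and $M\hookrightarrow M\otimes_A k(S)$. On the other hand, flat base change along $\spec k(S)\to S$ identifies $\delta\otimes_A k(S)$ with the map $H^0(X_\eta,E^{(i)}_\eta)\to H^0(X_\eta,G^{(i)}_\eta)$ coming from the fibrewise exact sequence above; and because $H^0(X_\eta,E^{(i+1)}_\eta)=H^0(X_\eta,E^{(i)}_\eta)$ for $i\geq d$, the first map in the cohomology sequence is an isomorphism, forcing this induced map to be zero. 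Hence any local section $\gamma$ of $f_*(E^{(i)})$ satisfies $\delta(\gamma)\otimes 1=0$ in $M\otimes_A k(S)$, so $\delta(\gamma)=0$ by torsion-freeness. Thus $\delta=0$.

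The step I expect to be the main obstacle is precisely the passage from fibrewise to sheaf-theoretic information: the function $h^0_{\mathrm{str}}(X_s,E_s)$ is only upper semicontinuous (Lemma \ref{lem:usc}) and need not be locally constant, so $H^0$-base change for $f_*(E^{(i)})$ may genuinely fail at jump points and one cannot argue that the $f_*(E^{(i)})$ are locally free of constant rank. The device that circumvents this is the flatness of the quotients $G^{(i)}$ established above, which makes their global sections torsion-free over the integral base, combined with the retreat to the generic fibre where stabilization is guaranteed; torsion-freeness then propagates the vanishing of $\delta$ from $\eta$ to all of $W$. One could instead argue more geometrically by Noetherian induction on $S$, stratifying the jump locus and invoking the uniform bound of Lemma \ref{lem:bound_ds} to make a single index $d$ work at every point; the route via the generic fibre seems cleaner and uses only Lemma \ref{lem:fE}.
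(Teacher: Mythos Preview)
Your argument is correct and takes a genuinely different route from the paper's.

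\medskip

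\textbf{What the paper does.} The paper reduces to showing that $h^0_{\str}(X_s,E_s)$ is constant on all of $S$. For a point $s$ outside the good open set $U$ it slices by a curve, replaces $S$ by $\spec A$ with $A$ a DVR, and then establishes base change at the closed point by passing to the formal completion $\widehat X$ and invoking Gieseker's equivalence $\str(\widehat X/k)\simeq\str(X_s/k)$ together with Grothendieck's theorem on formal functions. Constancy of $h^0_{\str}$ then feeds back, via Grauert and Nakayama (the mechanism of Corollary~\ref{cor:base_change}), to give the stabilisation $f_*(E^{(i)})=f_*(E^{\nabla_{X/S}})$.

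\medskip

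\textbf{What you do.} You bypass fibrewise base change entirely. The key observation is that each successive quotient $G^{(i)}=E^{(i)}/E^{(i+1)}$ is $S$-flat (indeed, $E^{(i+1)}$ is a local direct summand of $F_*E^{(i)}$ as $\sO_{X^{(i+1)}}$-modules by Cartier, so $G^{(i)}$ is even locally free on $X^{(i+1)}$; this gives a cleaner justification than the Tor--local-criterion route you sketch). Hence $f_*G^{(i)}$ is torsion-free over the integral base, and the vanishing of $\delta:f_*E^{(i)}\to f_*G^{(i)}$ can be tested at the generic point, where it follows from Lemma~\ref{lem:fE}. This is more elementary: no formal schemes, no appeal to \cite{Gieseker} or \cite{ogus}.

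\medskip

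\textbf{One trade-off worth noting.} The paper's argument proves, as an intermediate step, that $h^0_{\str}(X_s,E_s)$ is constant and that base change holds for $f_*E^{(d)}$ at every closed point; this is exactly what Corollary~\ref{cor:bc} (for non-flat $g$) and the unproved Corollary following it record, and what is used later in Lemma~\ref{lem:restriction} when writing $s^*f_*E^{(d)}=f_{s*}i^*E^{(d)}$. Your argument yields the Proposition as stated but does not by itself give base change at closed points, so if you replace the paper's proof by yours you should be prepared to supply that separately (e.g.\ by combining your stabilisation with Lemma~\ref{lem:bound_ds} and Grauert, or by a short additional argument).
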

\begin{proof}   As in the proof of Corollary \ref{cor:base_change}  is suffices to check that $h^0_\str(X_s,E_s)$ is a constant functor in $s$ on the whole $S$. We know there is an open set $U$ on which this function is constant. Thus, let $s$ be a closed point out side $U$. Let $C$ be a smooth, irreducible curve in $S$ passing though $s$ and lies entirely in $U$ (except for the point $s$). Since we are interested only on the fiber $X_s$, we can  replace $S$ by ${\rm Spec} \mathcal O_{C,s}$ in our consideration.

Thus, let $S={\rm Spec} A$, where $A$ is a discrete valuation ring. Let $K$ be the quotient field of $A$ and $\eta:{\rm Spec} K\to S$ be the generic point of $S$.
 Recall the notation $M^{i}:= H^0(X,E^{(i)})$, considered as an $A$-module. We have
$$M^0\supset M^1\supset\ldots\supset M^\infty=f_*(E^{\nabla_{X/S}}), \quad  M^\infty=\bigcap_i M^i.$$
Since $\{\eta\}$ is an open set in $S$  and since $f_*(E^{\nabla_{X/S}})$ is coherent on $S$ (Lemma \ref{h0_lexact}), we have
$$M^\infty\otimes_AK=\bigcap_i (M^i\otimes_AK).$$
Hence there exists $d$ such that $M^\infty \otimes K=M^i\otimes K, \quad\forall i\geq d.$
This implies $(M^i/M^\infty)\otimes K=0,\quad \forall i\geq d.$
Consequently, $M^d/M^\infty$ is a torsion module over $A$. Since $A$ is a DVR and $M$ is finitely generated, $M^d/M^\infty$ is Artinian. This shows that $M^{d'}=M^\infty$ for some $d'\geq d$.
Thus, it suffices to show that $H^0(X,E^{(d)})$ has base change for $d\gg 0$, i.e.,
\begin{equation}\label{eq:basechange}
H^0(X,E^{(d)})\otimes_Ak\cong H^0(X_s,E_s^{(d)}).\end{equation}
Indeed, \eqref{eq:basechange} will show that $H^0(X_s, E_s^{(d)})$ is constant in $s$ for $d\gg 0$.

The reader is referred to \cite[Sections~II.9 and III.11]{hartshorne} for the basics of the theory of formal schemes.
Let $\widehat A$ be the completion of $A$ with respect to the maximal ideal. Let $\widehat X$ be the formal completion of $X$ along $X_s$ and $\widehat E$ be the completion of $E$. Since $E$ is coherent, one has
$$\widehat E\cong E\otimes_{\mathcal O_X}\mathcal O_{\widehat X}.$$
Thus $\widehat E$ is a stratified bundle on $\widehat X/\widehat A$ in the sense of \cite[Sect~4]{santos2}: $\widehat E$ is a module over the algebra of differential operators $\Diff(\widehat X/\widehat A)$, or alternatively, the stratification on $\widehat E$ is determined by the family of subsheaves $\widehat E^{(i)}$. Notice that 
$$\widehat E^{(i)}=E^{(i)}\otimes_{\mathcal O_{X^{(i)}}}\mathcal O_{\widehat X^{(i)}},$$
where $X^{(i)}:=X\times_{F_A^i}A$.
By Grothendieck's theorem on formal functions \cite[Thm~III.11.1]{hartshorne}, we have
$$H^0(\widehat X,\widehat E^{(i)})\cong \widehat{H^0(X,E^{(i)})},$$
here $\widehat{H^0(X,E^{(i)})}$ is the completion of $H^0(X,E^{(i)})$ with respect to the maximal ideal of $A$. Since $H^0(X,E^{(i)})$ is coherent as  an $A$ module (as $X/A$ is proper) we have
$$\widehat{H^0(X,E^{(i)})} =H^0(X,E^{(i)})\otimes_A\widehat A.$$
Hence
$$H_{\rm str}^0(\widehat X/\widehat A,\widehat E)=\bigcap_i H^0(\widehat X,\widehat E^{(i)})=\bigcap_i H^0(X,E^{(i)})\otimes_A\widehat A=
H^0(X,E^{(d)})\otimes_A\widehat A,$$
for $d\gg 0$, as the sequence $H^0(X,E^{(i)})$ stabilizes.
Thus \eqref{eq:basechange} will follow if we can establish the isomorphism:
\begin{equation}\label{eq:basechange_f}H_{\rm str}^0(\widehat X/\widehat A, \widehat E)\otimes_{\widehat A}k\stackrel\cong\to H^0_\str(X_s/k,E_s).\end{equation}

In order to establish \eqref{eq:basechange_f} we proceed as follows: consider $\widehat X$ as a formal scheme over $k$. Then $\widehat E=E\otimes_{\mathcal O_X}\mathcal O_{\widehat X}$ is a module over $\Diff(\widehat X/k)$ in the sense of \cite[Sect~1]{ogus}. Notice that there is a natural forgetful functor from $\str(\widehat X/k)$ to $\str(\widehat X/\widehat A)$. On the other hand, according to \cite[Prop.~1.5]{Gieseker} there is an equivalence of tensor categories between   $\str(\widehat X/k)$ and $\str(X_s/k)$. In one direction the functor is given by restriction to the special fiber, and its quasi-inverse is given by lifting a stratified bundle on $X_s/k$ to $\widehat X/k$.  In particular, $\widehat E$ is isomorphic to the lift of $E_s$ and there is an isomorphism $k$-vector spaces of horizontal sections:
$$H^0_{\str}(\widehat X/k,\widehat E)\stackrel\cong\to H^0_{\str}(X_s/k, E_s)$$
given by restriction.
Since $H^0_{\str}(\widehat X/k,\widehat E)$ is a sub $k$-vector space of $H^0_{\str}(\widehat X/\widehat A, \widehat E)$. The restriction map
$$H^0_{\str}(\widehat X/\widehat A,\widehat E)\otimes_{\widehat A}k\to H^0_{\str}(X_s/k,E_s),$$
  is surjective. Consequently, the map in \eqref{eq:basechange_f} is surjective, hence bijective. Hence \eqref{eq:basechange} is an isomorphism.  
\end{proof}
\begin{cor}
Base change holds for the $0$-th Gau\ss-Manin stratified bundles. That is, for all smooth morphism $g:T\to S$ of smooth $k$-schemes:
$$\xymatrix@C=6ex{X\times_ST\ar[r]^{\quad g_X}\ar[d]_{f_T} & X\ar[d]^f\\
T\ar[r]_g&S}$$
  we have
\begin{equation}\label{cor:bc}
g^*(f_*(E^{\nabla_{X/S}}))\cong f_{T*}(g_X^*E^{\nabla_{X/S}}).
\end{equation}
\end{cor}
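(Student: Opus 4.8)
The plan is to reduce the base-change statement for the Gau\ss--Manin functor $f_*\bigl((-)^{\nabla_{X/S}}\bigr)$ to ordinary flat base change for a single coherent sheaf, the stabilization of Proposition~\ref{prop:bound_d} being exactly what legitimizes this reduction. First I note that, $g$ being smooth, $f_T\colon X_T:=X\times_ST\to T$ is the base change of the smooth proper map $f$, hence again smooth and proper, while $T$ is smooth over $k$ and the stratification of $E$ pulls back to make $g_X^*E$ an absolute stratified bundle on $X_T/k$. Thus Proposition~\ref{prop:bound_d} applies both to $(f,E)$ over $S$ and to $(f_T,g_X^*E)$ over $T$, and fixing $d$ larger than the two resulting bounds I obtain
\[
f_*(E^{\nabla_{X/S}})=f_*(E^{(d)}),\qquad f_{T*}\bigl((g_X^*E)^{\nabla_{X_T/T}}\bigr)=f_{T*}\bigl((g_X^*E)^{(d)}\bigr),
\]
where now $E^{(d)}$ and $(g_X^*E)^{(d)}$ are honest coherent sheaves.

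Next I would record that the Frobenius descent data is compatible with the smooth base change, i.e. $g_X^*E^{(i)}\cong (g_X^*E)^{(i)}$ for every $i$. This is checked level by level from the description in Theorem~\ref{thm:k-g}: each $E^{(i)}=\ker\nabla^{(i-1)}$ is the kernel of the $\sO_S$-linear connection $\nabla^{(i-1)}$ on $E^{(i-1)}$, the relative Frobenius $F_{X^{(i)}/S}$ commutes with the flat base change $g$, and $g_X^*$ is exact by flatness, hence commutes with the formation of these kernels. Starting from $g_X^*E^{(0)}=(g_X^*E)^{(0)}$, an induction gives the claim.

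Then the argument proceeds quickly. As $g$ is flat, the flat base change theorem \cite[III.9.3]{hartshorne} applied to the quasi-coherent sheaf $E^{(d)}$ gives $g^*f_*(E^{(d)})\cong f_{T*}(g_X^*E^{(d)})$, and by the previous step the right-hand side equals $f_{T*}\bigl((g_X^*E)^{(d)}\bigr)$. Combining this with the two stabilizations of the first paragraph yields
\begin{equation}\label{eq:GMbc}
g^*\bigl(f_*(E^{\nabla_{X/S}})\bigr)= g^*f_*(E^{(d)})\cong f_{T*}\bigl((g_X^*E)^{(d)}\bigr)= f_{T*}\bigl((g_X^*E)^{\nabla_{X_T/T}}\bigr).
\end{equation}

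The hard part will be to identify the target $f_{T*}\bigl((g_X^*E)^{\nabla_{X_T/T}}\bigr)$ of \eqref{eq:GMbc} with the target $f_{T*}(g_X^*E^{\nabla_{X/S}})$ of the statement. There is a natural comparison morphism $\alpha\colon g_X^*E^{\nabla_{X/S}}\to (g_X^*E)^{\nabla_{X_T/T}}$, but on $X_T$ it is the canonical map $g_X^*\bigl(\bigcap_iE^{(i)}\bigr)\to\bigcap_i g_X^*E^{(i)}$, and flat pullback does not commute with an infinite intersection, so $\alpha$ need not be an isomorphism before pushing forward; this is where I expect the genuine obstacle. I would resolve it after $f_{T*}$ as follows. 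On the one hand $\alpha$ is injective, being the corestriction to $(g_X^*E)^{\nabla_{X_T/T}}$ of the flat, hence exact, pullback of the inclusion $E^{\nabla_{X/S}}\hookrightarrow E$; by left exactness of $f_{T*}$ the map $f_{T*}\alpha$ is then injective. On the other hand, applying naturality of the base change morphism to the inclusion $E^{\nabla_{X/S}}\hookrightarrow E^{(d)}$, whose pushforward $f_*$ is an isomorphism by Proposition~\ref{prop:bound_d}, shows that the composite of the canonical base change map $g^*f_*(E^{\nabla_{X/S}})\to f_{T*}(g_X^*E^{\nabla_{X/S}})$ with $f_{T*}\alpha$ coincides with the isomorphism \eqref{eq:GMbc}; in particular $f_{T*}\alpha$ is surjective. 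A morphism of sheaves that is both injective and surjective is an isomorphism, so $f_{T*}\alpha$ is an isomorphism, and hence so is the canonical base change map $g^*\bigl(f_*(E^{\nabla_{X/S}})\bigr)\to f_{T*}(g_X^*E^{\nabla_{X/S}})$, which is precisely the assertion \eqref{cor:bc}.
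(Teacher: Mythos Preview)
Your argument is correct and follows essentially the same route as the paper: both rely on Proposition~\ref{prop:bound_d} to replace $E^{\nabla_{X/S}}$ by the coherent $E^{(d)}$ at the level of $f_*$, after which ordinary flat base change and a naturality square finish the job. The paper's own proof is a two-line sketch---it writes down the adjunction map $g^*f_*(E^{\nabla_{X/S}})\to f_{T*}(g_X^*E^{\nabla_{X/S}})$ and declares it an isomorphism of the underlying vector bundles ``according to Proposition~\ref{prop:bound_d}''---whereas you have carefully unpacked the implicit diagram chase; your detour through $(g_X^*E)^{\nabla_{X_T/T}}$ and the isomorphism $f_{T*}\alpha$ actually proves a bit more than the paper states, namely that the Gau\ss--Manin bundle of $g_X^*E$ formed over $T$ coincides with the pullback of the one formed over $S$.
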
 
 \begin{proof}
By adjunction we have a homomorphism 
$$g^*(f_*(E^{\nabla_{X/S}}))\to f_{T*}(g_X^*(E^{\nabla_{X/S}}))$$
 of stratified sheaves on $T$. According to Proposition \ref{prop:bound_d} this is an isomorphism of the underlying vector bundles, hence it is
an isomorphism of stratified sheaves.
\end{proof}
 \begin{cor} The function $h^0_{\rm str}(X_s/k, E_s)$ is a constant function of $s$.
\end{cor}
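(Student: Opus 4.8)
The plan is to identify the fibrewise dimension $h^0_{\rm str}(X_s,E_s)$ with the rank of the locally free sheaf $V:=f_*(E^{\nabla_{X/S}})$ on $S$, and then to invoke the connectedness of $S$ to conclude that this rank, hence the function itself, is constant. Two inputs drive the argument: the local freeness of $V$, recorded in Theorem \ref{thm1}(ii) and again in Proposition \ref{prop:bound_d}, and a base change isomorphism $H^0_{\rm str}(X_s,E_s)\cong V\otimes_{\sO_S}k(s)$ valid at every closed point $s$, not merely on the open set $U$ of Corollary \ref{cor:usc}.

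First I would fix, using Lemma \ref{lem:bound_ds} and Proposition \ref{prop:bound_d}, an integer $d$ large enough that $d\geq d(s)$ for all $s\in S$ and simultaneously $V=f_*(E^{(d)})$ as vector bundles on $S$. Since $S$ is connected, $V$ then has a single well-defined rank $r$, so $\dim_k\bigl(V\otimes_{\sO_S}k(s)\bigr)=r$ for every $s$. On the other hand, the choice $d\geq d(s)$ together with \eqref{eq:usc1} and \eqref{eq:ds} forces $H^0_{\rm str}(X_s,E_s)=H^0(X_s,E_s^{(d)})$ for every $s$. Thus the corollary reduces to the single assertion that $H^0(X_s,E_s^{(d)})$ is constant in $s$ and equal to $r$.

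This assertion is exactly the base change \eqref{eq:basechange} proved inside Proposition \ref{prop:bound_d}: restricting $S$ to a smooth curve through a given closed point $s$, and thence to the local ring $A=\sO_{C,s}$, the formal-functions computation there produces $H^0(X,E^{(d)})\otimes_Ak\cong H^0(X_s,E_s^{(d)})$ for $d\gg 0$. The left-hand side is the fibre at $s$ of the (locally free, of rank $r$) restriction of $V=f_*(E^{(d)})$ to the curve, so it has dimension $r$. Hence $h^0_{\rm str}(X_s,E_s)=h^0(X_s,E_s^{(d)})=r$ for every closed point $s$; as $k=\bar k$ these exhaust the points of interest, and constancy follows.

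The substance of the argument is therefore carried entirely by Proposition \ref{prop:bound_d}, and this is where I expect the only real obstacle to lie. Local freeness of $V$ alone does not suffice, because a proper pushforward can remain locally free while the dimension $h^0$ of the fibres jumps at special points; what rules this out here is precisely the base change isomorphism \eqref{eq:basechange}, whose proof rests on Gieseker's equivalence $\str(\widehat X/k)\simeq\str(X_s/k)$ together with Grothendieck's theorem on formal functions. Once that isomorphism is granted, the present corollary follows with no further computation.
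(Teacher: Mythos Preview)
Your proposal is correct and aligns with the paper's approach. In fact the paper gives no separate proof for this corollary at all: the constancy of $h^0_{\rm str}(X_s,E_s)$ is established directly inside the proof of Proposition~\ref{prop:bound_d} (the argument opens with ``it suffices to check that $h^0_{\rm str}(X_s,E_s)$ is a constant function in $s$ on the whole $S$'' and then does exactly that via the curve-and-DVR reduction and \eqref{eq:basechange}), so the corollary merely records what was already shown. Your write-up reassembles the same ingredients---local freeness of $V$, the bound on $d(s)$, and the base-change isomorphism \eqref{eq:basechange}---to reach the same conclusion, which is slightly more roundabout than simply pointing to that passage but is not a different method.
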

%
%
%

\section{The homotopy exact sequence}
Let $X/k$ be a smooth, connected scheme.
The category $\str(X/k)$ of stratified bundles, equipped with the fiber functor at a $k$-point $x$ of $X$, is a Tannaka category. Tannaka duality applied to these data yields a pro-algebraic group scheme $\pi^{\rm str}(X,x)$, called the stratified group scheme of $X$ at $x$. Nori's construction applied to this functor yields an  ind-stratified principal bundle $\widehat X\to X$ under $\pi^{\rm str}(X,x)$. One can show that this pro-algebraic scheme is smooth, i.e. is a pro-limit of smooth algebraic groups.

 We also consider the commutative and the solvable quotients of $\pi^{\rm str}(X,x)$, denoted correspondingly by $\pi_{\rm comm}^{\rm str}(X,x)$ and $\pi_{\rm sol}^{\rm str}(X,x)$, as well as the toric quotient $\pi_{\rm tor}^{\rm str}(X,x)$. Thus, assuming that $k$ is algebraically closed, $\pi_{\rm sol}^{\rm str}(X,x)$ is the Tannaka dual of the subcategory of $\str(X/k)$ of objects which are iterated extension of rank 1 objects, and $\pi_{\rm tor}^{\rm str}(X,x)$ is the Tannaka dual to the subcategory generated by rank 1 objects.

Let now $k$ be an algebraically closed field, $X,S$ be smooth schemes over $k$.
Consider now a smooth, proper map $f:X\to S$ with connected fibers, i.e. $f_*\sO_X=\sO_S$. Fix $x\in X(k)$ and let $s=f(x)$, $X_s$ be the fiber of $f$ at $s$:
\begin{equation}\label{fiber}
\xymatrix{ X_s\ar[r]^i \ar[d]_{f_s}& X\ar[d]^f\\  k\ar[r]_s &S}
\end{equation} 
 The map $f$ yields a tensor functor $f^*:\str(S/k)\to \str(X/k)$ and the map $i$ yields a tensor functor $i^*:\str(X/k)\to \str(X_s/k)$, which are compatible with fibers functor at $x$ and at $s$. Thus we the sequence
\begin{equation}\label{str_homotop}
 \pi_{\rm str}(X_s,x)\to\pi_{\rm str}(X,x)\to \pi_{\rm str}(S,s)  \to 1.
\end{equation}
Analogously, we have sequences
\begin{equation}\label{str_comm}
 \pi_{\rm comm}^{\rm str}(X_s,x)\to\pi_{\rm comm}^{\rm str}(X,x)\to \pi_{\rm comm}^{\rm str}(S,s)  \to 1,
\end{equation}
\begin{equation}\label{str_solv}
 \pi_{\rm solv}^{\rm str}(X_s,x)\to\pi_{\rm solv}^{\rm str}(X,x)\to \pi_{\rm solv}^{\rm str}(S,s)  \to 1.
\end{equation}

\begin{prop}\label{prop:surj}
The homomorphism $\pi_{\rm str}(X,x)\to \pi_{\rm str}(S,s)$ is surjective. The  same claim holds for $\pi_{\rm comm}^{\rm str}$ and $\pi_{\rm solv}^{\rm str}$.
\end{prop}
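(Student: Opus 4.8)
The plan is to translate surjectivity into a statement about the tensor functor $f^*\colon \str(S/k)\to\str(X/k)$ via Tannaka duality. I would invoke the standard Tannakian criterion (Deligne--Milne): the induced homomorphism $\pi_{\rm str}(X,x)\to\pi_{\rm str}(S,s)$ is faithfully flat, i.e. surjective, if and only if $f^*$ is fully faithful and its essential image is closed under subobjects, meaning that every stratified subbundle of an object $f^*V$ is isomorphic to $f^*V'$ for some subobject $V'\subseteq V$ in $\str(S/k)$. So I reduce the proposition to verifying these two properties for $f^*$, and then observe at the end that both verifications restrict to the Tannakian subcategories generated by rank one objects and their iterated extensions, which yields the assertions for $\pi_{\rm comm}^{\rm str}$ and $\pi_{\rm solv}^{\rm str}$.

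\emph{Full faithfulness.} Since $f^*$ is a unit preserving tensor functor, full faithfulness is equivalent to the equality $\operatorname{Hom}(\mathbf 1,f^*M)=\operatorname{Hom}(\mathbf 1,M)$ for every stratified bundle $M$ on $S$, applied to $M=V^\vee\otimes W$; equivalently $H^0_{\rm str}(X,f^*M)=H^0_{\rm str}(S,M)$. Here I would use that $f^*M$ is relatively trivial: its relative horizontal subsheaf satisfies $(f^*M)^{\nabla_{X/S}}=f^{-1}M$, because the relative horizontal sections of $\sO_X$ are $f^{-1}\sO_S$ by the connected fibre hypothesis $f_*\sO_X=\sO_S$. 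Pushing forward and using the projection formula gives $f_*\big((f^*M)^{\nabla_{X/S}}\big)=M$ with its original stratification, so that by the construction of the $0$-th Gau\ss-Manin functor (Theorem \ref{thm1}) one obtains $H^0_{\rm str}(X,f^*M)=H^0_{\rm str}\big(S,f_*((f^*M)^{\nabla_{X/S}})\big)=H^0_{\rm str}(S,M)$. This step is formal and uses only $f_*\sO_X=\sO_S$, not properness.

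\emph{Closure under subobjects.} Let $W\subseteq f^*V$ be a stratified subbundle; note that $W$ is again an absolute stratified bundle on $X/k$. First I analyse fibres: for each closed point $s\in S$ the restriction $W_s\subseteq (f^*V)_s=f_s^*(s^*V)$ is a stratified subbundle of the \emph{trivial} stratified bundle $\sO_{X_s}\otimes_k s^*V$. Since subobjects of a trivial object in the Tannakian category $\str(X_s/k)$ are themselves trivial, $W_s$ is trivial, generated by its horizontal sections, with $h^0_{\rm str}(X_s,W_s)=\operatorname{rank}W$ independent of $s$. Setting $V':=f_*(W^{\nabla_{X/S}})$, left exactness of the $0$-th Gau\ss-Manin functor (Lemma \ref{h0_lexact}, Theorem \ref{thm1}) makes $V'$ a stratified subbundle of $f_*((f^*V)^{\nabla_{X/S}})=V$. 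Applying the base change results to $W$ (Proposition \ref{prop:bound_d} and Corollary \ref{cor:base_change}) shows that $V'$ is locally free with $V'\otimes_{\sO_S}k(s)\cong H^0_{\rm str}(X_s,W_s)$, hence $\operatorname{rank}V'=\operatorname{rank}W$. The canonical morphism of stratified bundles $f^*V'\to W$ then restricts on each fibre $X_s$ to the evaluation map $\sO_{X_s}\otimes_k H^0_{\rm str}(X_s,W_s)\to W_s$, which is an isomorphism precisely because $W_s$ is trivial; being an isomorphism on all fibres between vector bundles on $X$, the map $f^*V'\to W$ is an isomorphism. Thus $W\cong f^*V'$ with $V'\subseteq V$, and the criterion yields surjectivity.

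I expect the last step to be the main obstacle. Without control of the fibres one only produces a morphism $f^*V'\to W$ that need not be an isomorphism, since a nonzero stratified bundle is in general not generated by its horizontal sections. The base change property of the $0$-th Gau\ss-Manin stratification, which is exactly where properness of $f$ enters, is what guarantees $V'\otimes_{\sO_S}k(s)=H^0_{\rm str}(X_s,W_s)$ and hence that the evaluation map is a fibrewise, and therefore global, isomorphism. For the commutative and solvable quotients I would run the identical argument inside the subcategories generated by rank one objects, respectively their iterated extensions: full faithfulness is inherited by restriction, and any subobject $W$ of $f^*V$ taken there is a fortiori a subobject in $\str(X/k)$, so it descends to some $V'\subseteq V$; since these subcategories are closed under subobjects in $\str(S/k)$, the descended $V'$ again lies in the subcategory, giving surjectivity of the maps in \eqref{str_comm} and \eqref{str_solv}.
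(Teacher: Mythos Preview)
Your proof is correct and follows the same overall strategy as the paper: invoke the Deligne--Milne criterion and show that any stratified subbundle $W\subset f^*V$ restricts to a trivial object on every fibre, then descend it to a subobject of $V$. The one notable difference is in how the descent is carried out. The paper works directly with the flat-bundle description $(E^{(i)},\sigma_i)$: since each $E_s^{(i)}$ is a subbundle of the trivial stratified bundle $(f^*M)_s$ and is therefore itself trivial on every fibre, Grauert's theorem applies term by term to give that $f_*(E^{(i)})$ is locally free with base change for \emph{every} $i$, and the isomorphisms $\sigma_i$ push forward to make $(f_*(E^{(i)}))_i$ a stratified subbundle of $M$. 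You instead form $V'=f_*(W^{\nabla_{X/S}})$ via the Gau\ss--Manin functor and invoke Proposition~\ref{prop:bound_d} for the fibre computation $V'\otimes k(s)\cong H^0_{\rm str}(X_s,W_s)$. Both routes are valid, but the paper's is lighter: Proposition~\ref{prop:bound_d} is a delicate result (it passes through formal schemes and Gieseker's lifting theorem to force stabilisation of the chain $f_*(E^{(i)})$), whereas in the present situation the uniform fibrewise triviality of all the $W^{(i)}$ makes Grauert immediately applicable and no stabilisation argument is needed. Your separate treatment of full faithfulness is fine but also subsumed by the paper's single check, since the projection formula $f_*f^*M^{(i)}=M^{(i)}$ already identifies the descended object with $M$ when $E=f^*M$.
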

 \begin{proof}The claim of Proposition \ref{prop:surj} is equivalent to saying that the image of $\str(S/k)$ under the pull-back functor $f^*$ is a full subcategory,  closed under taking subquotients (cf. \cite[Thm~2.11]{DM}).
Thus, it suffices to show, that for each stratified bundle $M$ on $S$, its pull-back $f^*(M)$ to $X$ has the property: for any $E\subset f^*(M)$ in $\str(X/k)$, there exists $N\subset M$ in $\str(S/k)$ such that $E\cong f^*(N)$ in $\str(X/k)$. 

By projection formula and by assumption that $f_*(\sO_X)=\sO_S$ we have $f_*f^*M^{(i)}=M^{(i)}$.
The restriction of $f^*(M)$ to each fiber $X_s$ is trivial in $\str(X_s/k)$, hence, this holds for the restriction of $E$ to $X_s$. In particular all the bundles $E_s^{(i)}$ are trivial on $X_s$. This holds for all $s\in S$, so that, by means of Grauert's theorem $f_*(E^{(i)})$ are locally free on $S$ and the isomorphism $\sigma_i:F^*(E^{(i+1)})\cong E^{(i)}$ yields the corresponding isomorphism for $f_*(E^{(i)})$, making it a stratified bundle on $S$, a subbundle of $M=(f_*(f^*(M^{(i)}))$.
\end{proof}

Next, we want to prove the exactness of \eqref{str_comm}.  
First we will need a lemma. Let $L\xrightarrow q G\xrightarrow p A$ be a sequence of
homomorphism of affine group scheme over a field $k$. It induces a
sequence of functors
\begin{equation}\label{a1}
\Rep(A)\xrightarrow{p^*}\Rep(G)\xrightarrow{q^*}\Rep(L)
\end{equation}
where $\Rep$ denotes the category of finite dimensional
representations over $k$.
 
\begin{lem}
  Assume that $G$ is commutative, $q:L\to G$ is a closed immersion and $p:G\to A$ is
faithfully flat (thus in particular surjective). Then the sequence $L\xrightarrow q G\xrightarrow
p A$ is exact at $G$ if and only if the following conditions are fulfilled:
 For an object $V\in \Rep(G)$, $q^*(V)$ in $\Rep(L)$ is trivial if and only if $V\cong p^*U$
for some $U\in \Rep(A)$.
 \end{lem}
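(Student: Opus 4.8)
The plan is to translate exactness at $G$ into an equality of two classes of representations of $G$ and then to recover each subgroup from its class. Write $N:=\ker p$, a closed subgroup of $G$ that is automatically normal since $G$ is commutative. Because $p$ is faithfully flat, it exhibits $A$ as the quotient $G/N$, and exactness at $G$ means exactly $q(L)=N$ inside $G$ (recall $q$ is a closed immersion, so $q(L)$ is a closed, again normal, subgroup). Throughout, for a closed subgroup $H\subseteq G$, I write $\Rep(G)^{H}$ for the full subcategory of those $V$ on which $H$ acts trivially; note that $q^*V$ is trivial in $\Rep(L)$ precisely when $q(L)$ acts trivially on $V$, i.e. $V\in\Rep(G)^{q(L)}$.

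First I would record the standard descent statement: for the faithfully flat quotient $p\colon G\surj A=G/N$, a representation $V\in\Rep(G)$ is isomorphic to $p^*U$ for some $U\in\Rep(A)$ if and only if $N$ acts trivially on $V$; the nontrivial inclusion is faithfully flat descent and may be quoted from \cite{DM}. In our notation this reads $p^*\Rep(A)=\Rep(G)^{N}$. Consequently the condition in the statement, namely ``$q^*V$ is trivial $\iff V\cong p^*U$ for some $U$'', is nothing but the equality of subcategories $\Rep(G)^{q(L)}=\Rep(G)^{N}$.

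The heart of the argument is the reconstruction of a closed normal subgroup from its class of trivially-acted representations. For any closed normal $H\subseteq G$ one has
\[ H=\bigcap_{V\in\Rep(G)^{H}}\ker\bigl(G\to\mathrm{GL}(V)\bigr). \]
Indeed, by the descent statement applied to $G\surj G/H$, the objects of $\Rep(G)^{H}$ are exactly the pullbacks of $\Rep(G/H)$, and the common kernel of all representations of the affine group scheme $G/H$ is trivial; pulling this back along $G\to G/H$ gives the displayed intersection $=H$. Applying the formula to the two closed normal subgroups $q(L)$ and $N$, and using that they carry the same trivially-acted class by the previous paragraph, yields $q(L)=N$, i.e. exactness. (In particular the equality of classes already forces $p\circ q=1$, so one need not verify $q(L)\subseteq N$ separately.) Conversely, if the sequence is exact then $q(L)=N$ and the two classes coincide tautologically, giving the condition.

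The step I expect to be the main obstacle, and the only place where the commutativity of $G$ is genuinely used, is this reconstruction formula. For a closed subgroup that is not normal, the intersection of the kernels of all representations on which it acts trivially is its normal closure rather than the subgroup itself; commutativity guarantees that $q(L)$ is normal, so the formula returns $q(L)$ exactly, and it can then be matched with the normal subgroup $N=\ker p$. The remaining inputs, namely the existence of the faithfully flat quotient $G/N$ and the descent of representations across it, are standard and will simply be cited from \cite{DM}.
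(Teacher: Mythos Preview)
Your argument is correct and is essentially the paper's own: both deduce the nontrivial implication from the fact that, once $G$ is commutative so that $q(L)$ is normal, the subgroup $q(L)$ (equivalently the quotient $G/q(L)$) is determined by the class of $G$-representations on which it acts trivially. The paper phrases this by forming $B:=G/q(L)$ and noting that the hypothesis forces $\Rep(B)$ and $\Rep(A)$ to coincide as full subcategories of $\Rep(G)$, whence $B=A$; your reconstruction formula $H=\bigcap_{V\in\Rep(G)^H}\ker\bigl(G\to\mathrm{GL}(V)\bigr)$ is the same principle stated on the subgroup side.
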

\begin{proof} We show the implication ``$\Longleftarrow$", the other implication is obvious.
Since $G$ is commutative, $L$ is normal in $G$, let $B:=G/L$. Thus we have an exact sequence $1\to L\to G\to B\to 1$. Then we know that $\Rep(B)$ is a full tensor subcategory of $\Rep(G)$, closed under taking subquotients. Moreover, by means of \cite[App.~A1]{EHS}, objects $\Rep(B)$ can be characterized by the condition given in Lemma.
This means $\Rep(B)$ and $\Rep(A)$ coincide as subcategories in $\Rep(G)$, whence $B=A$.
\end{proof}

Let $\sC_S$ be the image of $\str(S/k)$ in $\str(X/k)$, under $f^*$. Proposition \ref{prop:surj} says that $\sC_S$ is a full subcategory of $\str(X/k)$, closed under
 taking subquotients. We say that a stratified bundle in $\str(X/k)$ is {\em essentially} in  $\sC_S$ if it is isomorphic to an object of $\sC_S$.
 For each object $E$ in $\str(X/k)$ there exists a maximal subobject $E_S$ which is essentially in $\sC_S$. This is because our category $\str(X/k)$ is artinian.
  
For each $E\in \str(X/k)$, let $\langle E\rangle^\otimes$ be the full tensor subcategory of $\str(X/k)$, generated by $E$.

 \begin{lem}\label{lem:restriction}
Let $E$ be a stratified bundle on $X/k$ and $E_S$ be the maximal subobject of $E$, which is essentially in $\sC_S$. Then 
\begin{equation}\label{ES}E_S=f^*(f_*(E^{\nabla_{X/S}})).
\end{equation}
  Further, for any point $s\in S(k)$, $(E_S)_s$ is the maximal trivial subobject of $E_s$ in $\str(X_s/k)$.
\end{lem}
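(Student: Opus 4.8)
The plan is to identify the abstract maximal subobject $E_S$ with the explicit stratified bundle $f^*(M)$, where $M:=f_*(E^{\nabla_{X/S}})$ is the $0$-th Gau\ss-Manin stratification of $E$. By Theorem~\ref{thm1}, $M$ is a stratified bundle on $S$, so $f^*(M)$ is an object of $\sC_S$. The organising principle is that the Gau\ss-Manin functor $\Phi:=f_*\bigl((-)^{\nabla_{X/S}}\bigr):\str(X/k)\to\str(S/k)$ is right adjoint to $f^*$, with counit the canonical $\sO_X$-linear evaluation map $\phi\colon f^*(M)\to E$ obtained by composing $f^{*}M=\sO_X\otimes_{f^{-1}\sO_S}f^{-1}M\to \sO_X\otimes_{f^{-1}\sO_S}E^{\nabla_{X/S}}\to E$. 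First I would check that $\phi$ is a morphism in $\str(X/k)$: it is $\Der(S/k)$-horizontal precisely because the Gau\ss-Manin action on $M$ was defined by $\nabla_S(D_S)=\nabla(\tau(D_S))$ in Lemma~\ref{lem1}, and horizontality for the higher-order operators follows by repeating this at each Frobenius level, using the flat-sheaf description of Theorem~\ref{thm:k-g}. With this in hand the goal reduces to proving $\operatorname{im}(\phi)=E_S$.

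For the inclusion $\operatorname{im}(\phi)\subseteq E_S$: since $f^*(M)\in\sC_S$ and $\sC_S$ is closed under subquotients by Proposition~\ref{prop:surj}, the image $\operatorname{im}(\phi)$, being a quotient of $f^*(M)$ and a subobject of $E$, lies in $\sC_S$, hence is contained in the maximal such subobject $E_S$. For the reverse inclusion I would use the adjunction. Write $E_S\cong f^*(N)$ with $N\in\str(S/k)$, possible since $E_S$ is essentially in $\sC_S$ and $f^*$ is fully faithful. Applying the left-exact functor $\Phi$ (left exactness as in Theorem~\ref{thm1}) to the inclusion $\iota\colon E_S\hookrightarrow E$, together with $\Phi\circ f^*\cong\mathrm{id}$ (the relative horizontal sections of a pullback are $f^{-1}N$, whose pushforward is $N$ because $f$ has connected fibres), yields a monomorphism $\bar\iota\colon N\hookrightarrow M$ in $\str(S/k)$. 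The counit triangle identity then gives $\iota=\phi\circ f^*(\bar\iota)$ — equivalently, a morphism out of $f^*(N)$ is determined by its restriction to the relative horizontal sections $f^{-1}N$ — so $E_S=\operatorname{im}(\iota)\subseteq\operatorname{im}(\phi)$.

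To upgrade $\operatorname{im}(\phi)=E_S$ to the stated equality $E_S=f^*(M)$, I would prove that $\phi$ is injective by checking it fibrewise. For $s\in S(k)$, the base change results (Corollary~\ref{cor:base_change} and Proposition~\ref{prop:bound_d}) identify the fibre $M_s$ with $H^0_{\str}(X_s,E_s)$, and under this identification $\phi_s\colon f_s^*(M_s)=H^0_{\str}(X_s,E_s)\otimes_k\sO_{X_s}\to E_s$ is the evaluation map of horizontal sections. This map is injective by the linear-independence argument already used in the proof of Lemma~\ref{h0_lexact}(iii): a nontrivial relation $\sum x^{\mathbf n}e_{\mathbf n}=0$ among horizontal sections is destroyed by applying a suitable $D_{\mathbf n_0}$. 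Being injective on every fibre, $\phi$ is a subbundle inclusion, so $\operatorname{im}(\phi)\cong f^*(M)$ and \eqref{ES} follows.

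Finally, the fibrewise statement is almost immediate from \eqref{ES}: since $E_S\hookrightarrow E$ is a subbundle, restriction stays injective and $(E_S)_s=(f^*M)_s=f_s^*(M_s)$ is a \emph{trivial} stratified subbundle of $E_s$ of rank $\dim_k M_s=h^0_{\str}(X_s,E_s)$. As the maximal trivial subobject of $E_s$ is exactly the subbundle generated by all of $H^0_{\str}(X_s,E_s)$, and $(E_S)_s$ is a trivial subobject attaining this rank, the two coincide. I expect the \textbf{main obstacle} to be the first paragraph, namely verifying carefully that the evaluation map $\phi$ respects the \emph{full} stratification (all orders and all Frobenius twists) rather than merely the underlying connection; once that adjunction is in place, the remainder is either formal bookkeeping or a fibrewise reduction to the already-established base change and independence statements.
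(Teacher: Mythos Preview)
Your argument is correct and follows the same route as the paper: both show $N:=\Phi(E_S)\hookrightarrow M:=\Phi(E)$ by left exactness and use $f^*(M)\in\sC_S$ to force $N=M$, then invoke base change (Proposition~\ref{prop:bound_d}) for the fibrewise claim. You are more explicit than the paper about the counit $\phi$ and its injectivity (the paper simply asserts $f^*(f_*(E^{\nabla_{X/S}}))\subset E_S$ without constructing the map), and for the second part you use a rank comparison where the paper writes out the projection-formula identity $i^*f^*f_*E^{(d)}=f_s^*f_{s*}i^*E^{(d)}$ directly; these are minor variations rather than a different approach.
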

\begin{proof} Assume that $E_S=f^*(N)$, $N\in \str(S/k)$. Then $N=f_*((E_S)^{\nabla_{X/S}})$ hence is contained in $f_*(E^{\nabla_{X/S}})$ as a subobject in $\str(S/k)$. Since $f^*(f_*(E^{\nabla_{X/S}}))$ is in $\sC_{S}$, it is contained in $E_S=f^*(N)$. Thus $N=f_*(E^{\nabla_{X/S}})$. Thus \eqref{ES} is proved.

We prove the second claim. Let $E_s{}^\nabla$ be the maximal trivial object of $E_s$ as objects in $\str(S_s/k)$. The natural map $(E_S)_s\to E_s{}^\nabla\subset E_s$ is a map of stratified bundles. Thus it suffices to show that this is an isomorphism of vector bundles. 

In the notations of \eqref{fiber} we have, by means of \eqref{ES},
$$(E_S)_s=i^* f^*f_*(E^{\nabla_{X/S}}),\quad 
E_s{}^\nabla=f_s^*f_{s*}((i^*E)^{\nabla_{X_s/k}}).$$
One is led to showing that
$$i^* f^*f_*(E^{\nabla_{X/S}})=f_s^*f_{s*}((i^*E)^{\nabla_{X_s/k}}).$$
According to Proposition \ref{prop:bound_d}, on  this equality $f_*(E^{\nabla_{X/S}} )$ can be replaced by $f_*E^{(d)}$ and $f_{s*}((i^*E)^{\nabla_{X_s/k}})$ can be replaced by $f_{s*}(i^*E^{(d)})$ for some $d>0$. Thus, it is to show:
$$i^*f^*f_*E^{(d)}= f_s^*f_{s*}(i^*E^{(d)}).$$
This equation can be easily obtained by projection formula:
\begin{eqnarray*} i^*f^*f_*E^{(d)}&= f_s^* s^*f_*(E^{(d)})\\
&= f_s^*f_{s*}i^*(E^{(d)}).
\end{eqnarray*}
\end{proof}
 
 \begin{thm}\label{thm_solv}
The sequence \eqref{str_comm} and \eqref{str_solv} are exact.
\end{thm}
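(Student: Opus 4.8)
The plan is to read both statements through the Tannakian dictionary for a sequence $L\xrightarrow{q}G\xrightarrow{p}A\to 1$ of affine group schemes, where $(L,G,A)$ is $(\pi^{\rm str}_{\rm comm}(X_s,x),\pi^{\rm str}_{\rm comm}(X,x),\pi^{\rm str}_{\rm comm}(S,s))$ in the first case and its solvable analogue in the second. Proposition~\ref{prop:surj} already gives that $p$ is faithfully flat, so $p^*$ is fully faithful with image $\sC_S$ closed under subquotients; since $f\circ i$ factors through the point $s$ (diagram~\eqref{fiber}), $p\circ q$ is trivial, i.e. $i^*f^*=f_s^*s^*$ kills the stratification. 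Replacing $L$ by the closed image of $q$ — which affects neither the triviality of $q^*V$ nor the conclusion — we may assume $q$ is a closed immersion, so that everything reduces to the identification $\ker p=\operatorname{im} q$.

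For \eqref{str_comm} the group $G$ is commutative, and the Lemma on commutative group schemes preceding Lemma~\ref{lem:restriction} reduces exactness to the following: a commutative $E\in\str(X/k)$ has $i^*E$ trivial in $\str(X_s/k)$ if and only if $E\cong f^*(N)$. The direction ``$\Leftarrow$'' is clear as $i^*f^*N=f_s^*s^*N$ is constant. For ``$\Rightarrow$'' I would apply Lemma~\ref{lem:restriction}: triviality of $E_s$ means its maximal trivial subobject is all of $E_s$, hence $(E_S)_s=E_s$; the stratified bundle $E/E_S$ then has vanishing fibre over $s$, and being locally free of constant rank over the connected $X$ it must vanish, so $E=E_S=f^*(N)$ by \eqref{ES}. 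Here $N$ is again commutative, because $f^*$ is exact, faithful and full onto a subquotient-closed subcategory, hence matches composition factors and preserves ranks.

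For \eqref{str_solv} the same equivalence ``$i^*E$ trivial $\iff E\cong f^*(N)$'' holds by the identical argument — it never used commutativity, and $N=f_*(E^{\nabla_{X/S}})$ is solvable for the same reason as above. The difficulty is that for a non-commutative $G$ this equivalence only yields $\ker p=\langle\operatorname{im} q\rangle^{\rm norm}$, the normal closure. To promote this to $\ker p=\operatorname{im} q$ I would exploit the second assertion of Lemma~\ref{lem:restriction} in full: for every object $E$ the maximal trivial subobject of $q^*E=E_s$ is $(E_S)_s$, the restriction of the genuine subobject $E_S\subseteq E$. Evaluating the fibre functors at $x\in X_s(k)$, this says that the subspace of $\operatorname{im} q$-invariants is a $G$-subrepresentation in every representation; applied to the ind-object underlying the regular representation this forces $\operatorname{im} q$ to be normal, whence $\langle\operatorname{im} q\rangle^{\rm norm}=\operatorname{im} q=\ker p$.

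The step I expect to be the real obstacle is exactly this normality argument for the solvable quotient, together with the closed-immersion condition that the original $q$ be a monomorphism: the latter amounts to every solvable — equivalently, by dévissage, every rank-one — stratified bundle on $X_s$ being a subquotient of the restriction of one on $X$, and it is precisely this point that is available for the solvable and commutative quotients but open for the full group. I would therefore concentrate on verifying that the object-by-object statement of Lemma~\ref{lem:restriction} is inherited by the regular representation, and that rank-one bundles on the fibre are captured by global ones, the two places where the restriction to solvable quotients is essential.
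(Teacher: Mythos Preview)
Your treatment of the commutative sequence \eqref{str_comm} is correct and matches the paper: both reduce, via the commutative Lemma and Lemma~\ref{lem:restriction}, to the equivalence ``$i^*E$ trivial $\iff E\cong f^*N$'', and your rank argument for $E/E_S=0$ is clean.

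For \eqref{str_solv}, however, your normality step has a genuine gap. The implication ``$V^{H}$ is $G$-stable for every $V$ $\Longrightarrow$ $H$ is normal in $G$'' is \emph{false} without further hypotheses: take $G=\operatorname{SL}_2$ and $H=B$ a Borel; for every finite-dimensional $V$ one has $V^{B}=V^{G}$, which is certainly $G$-stable, yet $B$ is not normal. Passing to the regular representation does not help here, since $k[G]^{B}=k[\mathbb P^1]=k$. What the condition actually gives you is $k[G/H]=k[G/N]$ for $N$ the normal closure; to conclude $H=N$ you would need $G/H$ to be quasi-affine (i.e.\ $H$ observable in $G$), and you neither state nor prove this for the pro-solvable $G$ at hand. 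Your closing paragraph correctly flags this as the crux, but a proof proposal must supply the argument, not just locate it. (Incidentally, you do \emph{not} need the original $q$ to be a monomorphism: the sequence has no ``$1\to$'' on the left, and your own replacement of $L$ by $\operatorname{im}q$ already handles that.)

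The paper proceeds quite differently for the solvable case. Invoking the criterion of \cite[App.~A1]{EHS} together with Lemma~\ref{lem:restriction}, it reduces exactness to the single remaining condition: every $W\in\str_{\rm solv}(X_s/k)$ that arises as a \emph{quotient} of some $E_s$ must also arise as a \emph{subobject} of some $E'_s$. This is verified in two steps. First, for $W$ of rank one, the paper uses the canonical factorisation of a connected solvable group as an extension of a torus by its unipotent radical; the inclusion $G(E_s)\hookrightarrow G(E)$ restricts to an inclusion on the torus quotients, whence $W$ is in fact the restriction of a rank-one object on $X$ (the finite case being handled by the \'etale homotopy sequence). Second, for general $W$ of rank $r$, one observes that $\bigwedge^r W$ is rank one and hence a restriction, so $W^\vee\cong\bigwedge^{r-1}W\otimes(\bigwedge^r W)^{-1}$ is again a quotient of a restriction, and dualising turns the quotient $W$ into a subobject. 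This determinant-and-duality trick is the key idea you are missing; it bypasses the normality question entirely.
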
 
\begin{proof}
The exactness of \eqref{str_comm} follows directly from the above two lemmas. We prove the exactness of \eqref{str_solv}.

According to \cite[App.~A1]{EHS}, and Lemma \ref{lem:restriction} above it suffices to show that any object $W$ in $\str_{\rm solv}(X_s/k)$, which is a quotient object of $E_s$ for some $E$ in $\str_{\rm solv}(X/k)$, is a subobject of some $E_s'$, where $E'$ is another object in $\str_{\rm solv}(X/k)$.

 We first consider the case when $W$ is a line bundle. The Tannaka group $G(W)$ of $W$ is either finite or is isomorphic to $\mathbb G_m$, depending on whether $L$ is torsion (i.e. a tensor power of it is trivial) of not. Since the homotopy exact sequence of \'etale fundamental groups is exact, we can assume that $G(W)$ is $\mathbb G_m$. By assumption we have the following maps:
 $$\xymatrix{ G(E_s)\ar@{^(->}[r] \ar@{->>}[d]&G(E)\\ G(W)}$$
 By going to some \'etale covering can assume that $G(E)$ and $G(E_s)$ are both connected. Hence we have the following diagram with exact sequences (which are the canonical factorization of a connected solvable group scheme as an extension of a torus by a unipotent group)
 $$\xymatrix{
1\ar[r] & G(E_s)_u\ar[r]\ar[d] & G(E_s)\ar[r]\ar[d]& G(E_s)_t\ar[r]\ar[d]& 1\\
 1\ar[r] & G(E)_u\ar[r] & G(E)\ar[r]& G(E)_t\ar[r]& 1}
$$ 
Since $G(E)$ is connected, $G(E)_u$ consists of all unipotent elements of $G(E)$, and the same holds for $G(E_s)_u\subset G(E_s)$. Hence the injectivity of the middle map implies the injectivity of the other maps.
 
 Thus any representation of $G(E_s)_t$ is a subquotient of the restriction of a representation of a $G(E)_t$. Since $G(W)$ is a quotient of $G(E_s)$, we conclude that
 $W$ is a subquotient of the restriction of some object in $\str_{\rm tor}(X/k)$. In fact, since 
 object of $\str_{\rm tor}(X/k)$ are direct sums of line bundles, $W$ is the restriction of some line bundle in $\str(X/k)$.

In the general case, assume $W$ has rank $r$. Then $\bigwedge^r(W)$ is a line bundle.
If $W$ is a quotient of some $E_s$ then so is $\bigwedge^r(W)$. But then $\bigwedge^r(W)$ is isomorphic to some $V_s$. We see that $W^*\cong W\otimes \bigwedge^r(W)^*$ is also a quotient of some restriction, hence $W$ itself is a subobject of some restriction.
\end{proof}

The exactness of the sequence of the full fundamental group schemes \eqref{str_homotop} is still a question. The method presented here however allows us to handle the case of a product.
In this case we have K\"unneth product formula for the stratified fundamental groups. This result has been proved in an implicit from by Gieseker \cite[Thm. 2.4]{Gieseker}.
\begin{thm}\label{kunneth}
Let $X=Y\times_k S$, where $Y$ and $S$ are smooth, connected schemes over $k=\bar k$, and $Y$ is proper. Then the stratified fundamental group scheme of $X$ is isomorphic to the direct product
of those of $Y$ and $S$. More precisely, let $y\in Y$, $s\in S$ be $k$-points, and $x=(y,s)\in X$.
Then we have natural isomorphism
$$\pi_{\rm str}(X,x)\cong \pi_{\rm str}(Y,y)\times\pi_{\rm str}(S,s).$$
\end{thm}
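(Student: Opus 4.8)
The plan is to exhibit the asserted isomorphism as the Tannaka dual of the two projections and then to invert it by hand. Write $p\colon X\to Y$ and $q\colon X\to S$ for the projections; both are smooth, and $q$ is moreover proper with connected fibres, since $q_*\sO_X=\sO_S$ because $Y$ is proper and connected. The tensor functors $p^*\colon\str(Y/k)\to\str(X/k)$ and $q^*\colon\str(S/k)\to\str(X/k)$, compatible with the fibre functors at $x=(y,s)$, $y$, and $s$, dualise to a homomorphism
$$\phi=(p_*,q_*)\colon \pi_{\rm str}(X,x)\to \pi_{\rm str}(Y,y)\times\pi_{\rm str}(S,s).$$
The sections $\tau\colon Y\to X$, $y'\mapsto(y',s)$, and $\sigma\colon S\to X$, $s'\mapsto(y,s')$, of $p$ and $q$ induce $\tau_*$ and $\sigma_*$ into $\pi_{\rm str}(X,x)$. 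Since $p\tau=\mathrm{id}_Y$ and $q\sigma=\mathrm{id}_S$, while $p\sigma$ and $q\tau$ are constant and hence factor through $\spec k$ (whose stratified fundamental group is trivial), one computes $\phi\tau_*=\iota_Y$ and $\phi\sigma_*=\iota_S$, the inclusions of the two factors.

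The heart of the proof is that $\phi$ is a closed immersion; by the Tannakian criterion \cite[Prop.~2.21]{DM} this is the generation statement that every $E\in\str(X/k)$ is a subquotient of a finite direct sum of external products $p^*M\otimes q^*N$ with $M\in\str(Y/k)$, $N\in\str(S/k)$. To prove it, fix $E$, put $M_0:=\tau^*E$ (the restriction of $E$ to the fibre $X_s\cong Y$), and form the stratified bundle $\sH om(p^*M_0,E)$ on $X/k$. By Theorem~\ref{thm1} and Proposition~\ref{prop:bound_d} its relative horizontal push-forward $N:=q_*\bigl(\sH om(p^*M_0,E)^{\nabla_{X/S}}\bigr)$ is a stratified bundle on $S/k$, whose fibre at any $s'\in S(k)$ is, by the base change at closed points in Proposition~\ref{prop:bound_d}, equal to $\mathrm{Hom}_{\str(Y/k)}(M_0,E_{s'})$. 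The inclusion $q^*N\hookrightarrow\sH om(p^*M_0,E)$ is a morphism of stratified bundles — it is the maximal subobject coming from $S$, cf. Lemma~\ref{lem:restriction} — so by tensor–hom adjunction it yields a morphism of stratified bundles $\mathrm{ev}\colon p^*M_0\otimes q^*N\to E$. Here I invoke Gieseker's rigidity \cite[Prop.~1.5]{Gieseker}, already used in Proposition~\ref{prop:bound_d}: since $Y$ is proper the isomorphism class of the fibre $E_{s'}$ is locally constant in $s'$, hence constant on the connected $S$, so $E_{s'}\cong M_0$ for every $s'$. Then $\mathrm{Hom}_{\str(Y/k)}(M_0,E_{s'})$ contains an isomorphism, $\mathrm{ev}$ is surjective on every fibre $X_{s'}$, and by Nakayama it is surjective. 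Thus $E$ is a quotient of $p^*M_0\otimes q^*N$; applying this to $E^\vee$ and dualising exhibits $E$ also as a subobject of an external product. This proves generation, so $\phi$ is a closed immersion, in particular a monomorphism.

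It remains to invert $\phi$. Because $\phi$ is a monomorphism while $\phi\tau_*=\iota_Y$ and $\phi\sigma_*=\iota_S$ have images $\pi_{\rm str}(Y,y)\times 1$ and $1\times\pi_{\rm str}(S,s)$, which commute in the product, the subgroups $\tau_*\pi_{\rm str}(Y,y)$ and $\sigma_*\pi_{\rm str}(S,s)$ of $\pi_{\rm str}(X,x)$ commute: every commutator between them is killed by the injection $\phi$. Hence
$$\psi\colon \pi_{\rm str}(Y,y)\times\pi_{\rm str}(S,s)\to\pi_{\rm str}(X,x),\qquad (g,h)\mapsto\tau_*(g)\,\sigma_*(h),$$
is a homomorphism, and the computation of $\phi\tau_*$, $\phi\sigma_*$ gives $\phi\psi=\mathrm{id}$. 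Since $\phi$ is a monomorphism, $\phi(\psi\phi)=\phi$ forces $\psi\phi=\mathrm{id}$ as well, so $\phi$ is an isomorphism with inverse $\psi$, which is the desired natural isomorphism.

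The main obstacle is the generation step: everything turns on the fact that, $Y$ being proper, the family of fibre-restrictions $E_{s'}$ is rigid and therefore constant across the connected base $S$, so that the evaluation map becomes fibrewise surjective. This is exactly where properness of $Y$ is indispensable — it underlies both the base change in Proposition~\ref{prop:bound_d} and Gieseker's lifting equivalence — and without it the fibres could genuinely vary in $s'$, so that $\phi$ would fail to be a closed immersion.
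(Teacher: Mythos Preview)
Your argument is correct in outline and takes a genuinely different route from the paper. The paper proves the theorem by showing that the homotopy sequence
\[
1\to\pi_{\rm str}(Y,y)\xrightarrow{\,i_*\,}\pi_{\rm str}(X,x)\xrightarrow{\,f_*\,}\pi_{\rm str}(S,s)\to 1
\]
is exact: surjectivity on the right is Proposition~\ref{prop:surj}, injectivity on the left is immediate from the retraction $g_*$ (your $p_*$), and exactness in the middle comes from the criterion of \cite[App.~A1]{EHS} together with Lemma~\ref{lem:restriction}. The only new input is the one-line observation that every $W\in\str(Y/k)$ is already a restriction, namely $W=i^*(g^*W)$; the existence of the retraction $g_*$ then forces the split exact sequence to be a direct product. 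You instead prove directly that $\phi=(p_*,q_*)$ is an isomorphism by establishing the generation statement that every $E\in\str(X/k)$ is a subquotient of an external product $p^*M\otimes q^*N$. This is more work, but it yields a sharper structural description of $\str(Y\times S/k)$ and makes the role of the Gau\ss--Manin construction explicit.

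One point deserves tightening. You invoke \cite[Prop.~1.5]{Gieseker} to conclude that the isomorphism class of $E_{s'}$ is locally constant on $S$, but that proposition is the \emph{formal} equivalence $\str(\widehat X/k)\simeq\str(X_s/k)$; by itself it does not compare fibres over two distinct closed points of $S$, and passing from formal rigidity at each point to Zariski-local constancy requires a further specialisation argument. Fortunately you do not actually need this step: the cokernel of your map $\mathrm{ev}\colon p^*M_0\otimes q^*N\to E$ is a coherent stratified sheaf on the connected scheme $X$, hence locally free of constant rank, and it vanishes on the fibre $X_s$ because by base change $N_s=\mathrm{End}_{\str(Y/k)}(M_0)$ contains the identity. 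Therefore $\mathrm{ev}$ is surjective everywhere, and the remainder of your argument goes through unchanged.
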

\begin{proof}
Let $f:X\to S$ denote the projection to $S$ and let $g:X\to Y$ denote the projection to $Y$. Further the point $s\in S$ yields a closed embedding $i:Y=Y\times \{s\}\to Y\times S=X$.  Consider the sequence as in \eqref{str_homotop}:
$$1\to  \pi_{\rm str}(Y,y)\xrightarrow{i_*}\pi_{\rm str}(X,x)\xrightarrow{f_*} \pi_{\rm str}(S,s)  \to 1.$$
We have $g\circ i={\rm id}_Y$. Hence $i_*$ is an inclusion. And it remains to show that the sequence is exact.

According to \cite[App.~A1]{EHS} it suffices to show that each stratified bundle $E$ on $Y/k$ is embeddable into the restriction to $Y$ (by means of $i^*$) of some stratified bundle on $X/k$. But we have $E=i^*(g^*E)$ and $g^*E$ is a stratified bundle on $X$.
\end{proof}

\section*{ Acknowledgment}
The special thank goes to Prof. H\'el\`ene Esnault for many valuable comments and suggestions that shape this work. J.P. dos Santos also has some thoughts on the homotopy sequence of stratified fundamental group scheme. I thank him for sharing his ideas.
A special thank goes to Lei Zhang for his interest in the work and very helpful discussions. In particular he has pointed out a gap in the previous version of the work.
Finally I would like to thank the anonymous referee for the careful reading and helpful suggestions.



\begin{thebibliography}{99}
\bibitem{BO} P. Berthelot, A. Ogus, {\em Notes on crystalline cohomology}, Princeton Univ. Press  (1978).
\bibitem{DM} P. Deligne, J. Milne:
Tannakian Categories, Lectures Notes in Mathematics {\bf 900},
101--228, Springer-Verlag (1982).

\bibitem{Gieseker} D. Gieseker, {\em Flat vector bundles}, Annali della Scuola Normale Superiore di Pisa, {\bf 2}, no. 1, p. 1-31, (1975).
\bibitem{EGA3} A. Grothendieck and J. Dieudonn\'e,  El\'ements de G\'eom\'etrie Alg\'ebrique III (EGA 3), Publication Math. IHES {\bf 17}, (1963). 
\bibitem{EGA4} A. Grothendieck and J. Dieudonn\'e,  El\'ements de G\'eom\'etrie Alg\'ebrique IV (EGA 4), Publication Math. IHES {\bf 32}, (1967). 

\bibitem{EH1} H. Esnault and P.H. Hai
\newblock {The {G}auss-{M}anin connection and {T}annaka duality.}
\newblock {\em Int. Math. Res. Not.},  Art. ID 93978, 1-35, (2006).
\bibitem{EHS} H. Esnault, P.H.  Hai, X. Sun: {\em On Nori's Fundamental Group
Scheme, Progress in Mathematics}, Vol. {\bf 265} (2007), 377-398.
\bibitem{esnault} H. Esnault and V. Mehta, 
{\em Simply connected projective manifolds incharacteristic $p > 0$ have no nontrivial stratified bundles}, Inventiones Mathematicae {\bf  181}, p. 449–465 (2010).

\bibitem{hartshorne} R. Hartshorne, {Algebraic geometry}, Springer (1977).
\bibitem{Katz} N. Katz, {\em Nilpotent  connections and the monodromy  theorem: applications of a result of Turrittin.} Publ. Math. IHES {\bf 39}, p. 175-232 (1970).

\bibitem{ogus} A. Ogus, {\em Cohomology of the infinitesimal site}, Annales scientifiques E.N.S., tom 8, nr. 3, p. 295-318, (1975).

\bibitem{santos} J. dos Santos, {\em Fundamental group schemes for stratified sheaves}, Journal of Algebra 317, pp. 691–713 (2007). 

\bibitem{santos2} J. dos Santos, {\em The behaviour of the differential Galois group on the generic and special fibres: A Tannakian approach} J. reine angew. Math. {\bf 637}, p. 63-98 (2009)

\end{thebibliography}
\end{document}